\newlength\imagewidth
 \newtheorem{lemma}{Lemma}[section]
 \newtheorem{theorem}{Theorem}[section]
\newtheorem{remark}{Remark}[section]
\renewcommand{\pi}
{\mathord{\text{\Pifont{psy}p}}}               
\renewcommand{\emptyset}
{\mathord{\raisebox{-0.25ex}{$\text{\Pisymbol{psy}{198}}$}}}         
\definecolor{dgreen}{rgb}{0,.6,0}
\def\ps@pprintTitle{%
     \let\@oddhead\@empty
     \let\@evenhead\@empty

     \def\@oddfoot{}%

     \let\@evenfoot\@oddfoot}
\def\@seccntDot{.}
\begin{document}
\begin{CJK*}{GBK}{song}

\begin{frontmatter}
\title{Chern-Simons Higgs models for $p$-Laplacian on finite graphs: a topological degree approach}

\author{Chunlian Liu}\ead{clliu06@163.com}

\author{Yating Ge}\ead{2755524361@qq.com}

\author{Linfeng Wang\corref{cor1}}\cortext[cor1]{Corresponding author}
\ead{wlf711178@ntu.edu.cn}

\address{School of Mathematics and Statistics, Nantong University, Nantong 226019, China}

\begin{abstract}
We investigate the Chern-Simons Higgs models for $p$-Laplacian on a connected finite graph, employing topological degree theory as our main tool. Notably, we overcome the difficulties arising from the nonlinearity of $p$-Laplacian operator and calculate the corresponding topological degree through a more general approach.
\end{abstract}

\begin{keyword}
Topological degree; Chern-Simons Higgs models; finite graphs; $p$-Laplacian

\medskip
\MSC 53C21, 05C99

\end{keyword}

\end{frontmatter}

\section{Introduction}
\setcounter{equation}{0}

Let $G=(V,E)$ be a connected finite graph, where $V$ denotes the set of vertices and $E$ the set of edges. For $p>1$, the $p$-Laplacian of a function $u: V \rightarrow \mathbb{R}$ is defined as
\[\Delta_p u(x)=\sum_{y \sim x}|u(y)-u(x)|^{p-2}(u(y)-u(x)),\]
where $y\sim x$ indicates that $y$ is adjacent to $x$, i.e. $x y \in E$. In particular, the $2$-Laplacian of a function $u: V \rightarrow \mathbb{R}$ is defined as
\[\Delta u(x)=\sum_{y \sim x}(u(y)-u(x)).\]

We are interested in the Chern-Simons Higgs models for $p$-Laplacian of the form
\begin{equation}\label{1.1}
\Delta_pu=\lambda e^u(e^u-1)+f
\end{equation}
on $G$, where $\lambda\in\mathbb{R}$, and $f:V\to\mathbb{R}$ is a given function.

The study of Chern-Simons Higgs models can be traced back to the works of Hong-Kim-Pac\cite{Hong-1990} and Jackiw-Weinberg\cite{Jackiw-1990}. It is one of the classical partial differential equations commonly encountered in geometry and physics, and has attracted considerable attention from mathematicians. Many researchers have contributed to this field; see, for instance, \cite{2,3,9,31,41,43,46}, and the references therein.

Recently, there has been growing interest among scholars in the analysis of Chern-Simons Higgs models on graphs. In particular, Huang-Lin-Yau\cite{Huang-2020} and Hou-Sun\cite{Hou-2022} investigated the existence of solutions to the equation
\begin{equation*}\label{1.02}
\Delta u=\lambda e^u(e^u-1)+4\pi\sum_{i=1}^N \delta_{p_i},
\end{equation*}
where $N$ is a positive integer, $p_1,\cdots,p_N$ are arbitrarily chosen vertices on the graph and $\delta_{p_i}$ is the Dirac delta
mass at $p_i$. Their approach was based on the method of upper and lower solutions. Using the variational method, Chao-Hou\cite{Chao-2023} investigated the existence of solutions to the Chern-Simons Higgs models of the form
\begin{equation*}\label{1.002}
\Delta u=\lambda e^u(e^u-1)^5+4\pi\sum_{i=1}^N\delta_{p_i}.
\end{equation*}
For more related works, we refer the reader to \cite{Huang-2021,Li-CVPDE-2024} and the references therein.

It is well known that the topological degree theory is a powerful tool for studying partial differential equations in the Euclidean space and on Riemann surfaces; it can also be applied to deal with partial differential equations on finite graphs. This approach was first used by Sun-Wang\cite{Sun-adv-2022} to solve the Kazdan-Warner equation on finite graphs
\begin{equation*}\label{1.03}
\Delta u=c-h e^u,
\end{equation*}
where $h$ is a real function on $V$ and $c$ is a real constant. Recently, the topological degree theory was also employed by Liu\cite{Liu-2022} to study the mean field equation
\begin{equation*}\label{1.003}
\Delta u=\rho \left(\frac{h e^u}{\int_V he^ud\mu}-\frac{1}{|V|}\right)
\end{equation*}
with $\rho\in\mathbb{R}\setminus\{0\}$, $h:V\to\mathbb{R}^+$ being a function satisfying $\min_{x\in V}h(x)>0$, and $|V|$ denoting the cardinality of the vertex set $V$. Here
\[\int_{V} g d \mu=\sum_{x \in V} g(x)\]
denotes the integral of $g: V\to \mathbb{R}$. Moreover, Li-Sun-Yang\cite{Li-CVPDE-2024} also used it to investigate the Chern-Simons Higgs models on finite graphs of the form
\begin{equation*}\label{1.001}
\Delta u=\lambda e^u(e^u-1)+f.
\end{equation*}

Our aim is to apply this powerful tool to study the Chern-Simons Higgs models (\ref{1.1}) for the $p$-Laplacian. Since $\Delta_p$ is nonlinear, new challenges arise in calculating the topological degree of the corresponding map.

\medskip
It is well known that applying topological degree theory involves two crucial steps. The first step is to establish a priori bound for solutions, as stated in the following theorem.

\begin{theorem}\label{t1.1}
Let $G=(V, E)$ be a connected finite graph. Suppose $\sigma \in[0,1]$, and $\lambda$ and $f$ satisfy $\lambda\int_{V}fd\mu<0$. If $u$ is a solution of the equation
\begin{equation}\label{1.3}
\Delta_p u=\lambda e^{u}\left(e^{u}-\sigma\right)+f, \quad\hbox{in}~~ V,
\end{equation}
then there exists a constant $C$, depending only on $\lambda$, $f$ and $|V|$, such that $|u(x)| \leqslant C$ for all $x \in V$.
\end{theorem}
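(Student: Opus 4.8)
The plan is to first control $u$ at its maximum point, then transfer that control to every vertex by an oscillation estimate adapted to the nonlinearity of $\Delta_p$. Summing (\ref{1.3}) over $V$ and noting that the contributions $|u(y)-u(x)|^{p-2}(u(y)-u(x))$ and $|u(x)-u(y)|^{p-2}(u(x)-u(y))$ of each edge $xy$ cancel, one gets $\int_V\Delta_p u\,d\mu=0$, hence
\begin{equation*}
\int_V e^{2u}\,d\mu-\sigma\int_V e^{u}\,d\mu=\int_V e^{u}(e^{u}-\sigma)\,d\mu=-\frac{1}{\lambda}\int_V f\,d\mu=:K ,
\end{equation*}
where $K>0$ because the assumption $\lambda\int_V f\,d\mu<0$ forces $\lambda\neq0$ and makes $-\lambda^{-1}\int_V f\,d\mu$ positive. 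Write $u_{\max}=\max_V u$ and $u_{\min}=\min_V u$. Using $\int_V e^{2u}\,d\mu\geqs e^{2u_{\max}}$, $\int_V e^{u}\,d\mu\leqs|V|e^{u_{\max}}$ and $0\leqs\sigma\leqs1$, the identity gives $e^{2u_{\max}}\leqs K+|V|e^{u_{\max}}$, so $u_{\max}$ is bounded above; using instead $\int_V e^{2u}\,d\mu\leqs|V|e^{2u_{\max}}$ and $\int_V e^{2u}\,d\mu=K+\sigma\int_V e^u\,d\mu\geqs K$ gives $u_{\max}\geqs\frac12\log(K/|V|)$. Hence $|u_{\max}|\leqs C_1$ for a constant $C_1$ depending only on $\lambda$, $f$, $|V|$, and therefore $\|\Delta_p u\|_{\infty}=\|\lambda e^{u}(e^{u}-\sigma)+f\|_{\infty}\leqs|\lambda|e^{C_1}(e^{C_1}+1)+\max_V|f|=:C_2$.

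The remaining, and genuinely new, task is to bound $u_{\min}$ from below; this follows once we prove that $\|\Delta_p u\|_{\infty}\leqs C_2$ implies $u_{\max}-u_{\min}\leqs C_3$ for a constant $C_3=C_3(C_2,p,|V|)$. Let $x_0$ satisfy $u(x_0)=u_{\max}$. Since every summand of $\Delta_p u(x_0)=\sum_{y\sim x_0}|u(y)-u(x_0)|^{p-2}(u(y)-u(x_0))$ is nonpositive, for each neighbour $y$ of $x_0$ we get $|u(x_0)-u(y)|^{p-1}\leqs-\Delta_p u(x_0)\leqs C_2$. I would then argue by induction on the graph distance $k=\mathrm{dist}(v,x_0)$ that $u(x_0)-u(v)\leqs M_k$, where $M_0=0$ and $M_{k+1}=M_k+\big(C_2+(|V|-1)M_k^{p-1}\big)^{1/(p-1)}$. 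For $v$ at distance $k+1$, choose a neighbour $y$ of $v$ with $\mathrm{dist}(y,x_0)=k$. If $u(v)\geqs u(y)$ then $u(x_0)-u(v)\leqs u(x_0)-u(y)\leqs M_k$. If $u(v)<u(y)$, split
\begin{equation*}
-\Delta_p u(y)=\sum_{w\sim y,\,u(w)<u(y)}|u(w)-u(y)|^{p-1}-\sum_{w\sim y,\,u(w)>u(y)}|u(w)-u(y)|^{p-1};
\end{equation*}
the term $w=v$ appears in the first sum, each term of the second sum is at most $(u(x_0)-u(y))^{p-1}\leqs M_k^{p-1}$ (because $x_0$ is the global maximum), and $-\Delta_p u(y)\leqs C_2$, so $|u(y)-u(v)|^{p-1}\leqs C_2+(|V|-1)M_k^{p-1}$ and hence $u(x_0)-u(v)=(u(x_0)-u(y))+(u(y)-u(v))\leqs M_{k+1}$. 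Since $(M_k)$ is nondecreasing and $G$ has diameter at most $|V|-1$, we conclude $u_{\max}-u_{\min}\leqs M_{|V|-1}=:C_3$.

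Putting the pieces together, $u_{\min}\geqs u_{\max}-C_3\geqs\frac12\log(K/|V|)-C_3$ while $u_{\max}\leqs C_1$, so $|u(x)|\leqs C$ on $V$ with $C$ depending only on $\lambda$, $f$ and $|V|$ (and the fixed exponent $p$). I expect the oscillation estimate to be the main obstacle: for $p=2$ the local bounds combine linearly and the estimate is routine, whereas for general $p>1$ the quantities $|u(w)-u(y)|^{p-1}$ no longer add up linearly, so the induction must be driven by the nonlinear recursion defining $M_k$, and the delicate point is to isolate a single neighbour's contribution to $\Delta_p u(y)$ while absorbing all the remaining signed terms using the value at the maximum.
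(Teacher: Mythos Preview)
Your argument is correct, and its overall architecture matches the paper's: use the integrated equation to trap $u_{\max}$ in a compact interval, deduce an $L^\infty$ bound on $\Delta_p u$, and then convert this into an oscillation bound $u_{\max}-u_{\min}\leqslant C_3$. The genuine difference is in how that oscillation bound is obtained. The paper uses an integration-by-parts identity $\int_V|\nabla u|^p\,d\mu=-\int_V(u-\bar u)\,\Delta_p u\,d\mu$ together with H\"older and a Poincar\'e inequality on the graph (their Lemma~\ref{l2.0}) to get $\int_V|\nabla u|^p\,d\mu\leqslant C\|\Delta_p u\|_\infty^{q}$, and then bounds $u_{\max}-u_{\min}$ by a shortest-path/Jensen argument in terms of $\big(\int_V|\nabla u|^p\big)^{1/p}$. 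Your route is purely combinatorial: you propagate a pointwise bound outward from the maximum vertex by induction on graph distance, exploiting that at each vertex the signed decomposition of $\Delta_p u(y)$ lets you isolate one ``downhill'' neighbour while absorbing all ``uphill'' neighbours using the global maximum. This avoids the Poincar\'e inequality entirely and is more elementary and self-contained; on the other hand, the paper's approach gives a cleaner and sharper dependence of the oscillation on $\|\Delta_p u\|_\infty$ (namely $\|\Delta_p u\|_\infty^{q/p}$ with $q=p/(p-1)$), whereas your recursion $M_{k+1}=M_k+\big(C_2+(|V|-1)M_k^{p-1}\big)^{1/(p-1)}$ produces a constant that grows rapidly with the diameter. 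Either estimate suffices for the qualitative statement of the theorem.
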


In Theorem \ref{t1.1} and the following two theorems, $\lambda$ is a real number, and $f$ is a fixed function on $V$. When $\sigma=1$, equation \eqref{1.3} is exactly \eqref{1.1}. Furthermore, we define the space $L^{\infty}(V)$ and its associated norm as
\[L^{\infty}(V) = \{ u : V \to \mathbb{R}: \|u\|_{L_\infty(V)}<+\infty \}, \]
and
 \[\|u\|_{L^{\infty}(V)} = \sup_{x \in V} |u(x)|,\]
respectively. Let $X=L^{\infty}(V)$, in which the zero element is denoted by $0$, and define a map $F: X \rightarrow X$ as
\begin{equation}\label{1.4}
F(u)=-\Delta_p u+\lambda e^{u}\left(e^{u}-1\right)+f.
\end{equation}
The second step is to calculate the topological degree of $F$ by utilizing its homotopy invariance property.

\begin{theorem}\label{t1.2}
Let $G=(V, E)$ be a connected finite graph, and let $F: X \rightarrow X$ be the map defined by \eqref{1.4}. Assume that $\lambda \int_{V} f d \mu < 0$. Then there exists a $R_{0}>0$ large enough such that for all $R \geqslant R_{0}$,
\[\operatorname{deg}\left(F, B_{R}, 0\right)=\operatorname{sgn}{\lambda}=
\left\{\begin{array}{lll}\vspace{0.1cm}\displaystyle
1, & \text {if}\quad\lambda>0, \\ \displaystyle
-1, & \text {if}\quad\lambda<0,
\end{array}\right.\]
where $B_{R}=\left\{u \in X:\|u\|_{L^{\infty}(V)}<R\right\}$ is the ball in $X$ centered at $0\in X$ with radius $R$, and $\deg(F, B_R, 0)$ is the topological degree of $F$ in $B_R$ at $0$.
\end{theorem}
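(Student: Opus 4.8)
The plan is to deform $F$, through a finite chain of homotopies that stay away from $0$ on $\partial B_R$, into a map whose degree is transparent, and to close with a one–dimensional sign computation. Everything happens in the finite–dimensional space $X=L^\infty(V)$, so we use the Brouwer degree and need no compactness. Two structural facts are used throughout: $\Delta_p$ is unchanged under adding a constant, so $\Delta_p u$ always lies in $X_0:=\{w\in X:\int_V w\,d\mu=0\}$ and $X=\mathbb{R}\mathbf{1}\oplus X_0$ with $\mathbf{1}\equiv1$; and $\lambda\bar f<0$, where $\bar u=\frac1{|V|}\int_V u\,d\mu$ and $\bar f=\frac1{|V|}\int_V f\,d\mu$.

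\emph{Step 1: reducing $F$.} The homotopy $H_1(t,u)=-\Delta_p u+\lambda e^{u}(e^{u}-t)+f$, $t\in[0,1]$, has $H_1(1,\cdot)=F$, and a zero of $H_1(t,\cdot)$ is exactly a solution of \eqref{1.3} with $\sigma=t$; by Theorem~\ref{t1.1} all such zeros are bounded by a constant $C=C(\lambda,f,|V|)$ independent of $t$, so for $R_0>C$ and $R\geqslant R_0$ homotopy invariance gives $\operatorname{deg}(F,B_R,0)=\operatorname{deg}(G,B_R,0)$, $G(u)=-\Delta_p u+\lambda e^{2u}+f$. Next, run
\[H_2(t,u)=-\Delta_p u+\lambda\big[t\,e^{2u}+(1-t)e^{2\bar u}\big]+\big[t f+(1-t)\bar f\,\mathbf{1}\big],\qquad t\in[0,1],\]
which freezes both $f$ and the oscillation of the exponent. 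A zero of $H_2(t,\cdot)$, summed over $V$, satisfies $t\!\int_V e^{2u}\,d\mu+(1-t)|V|e^{2\bar u}=-|V|\bar f/\lambda=:A>0$; hence $t\!\int_V e^{2u}\,d\mu\leqslant A$ (so $\|\lambda t e^{2u}\|_{L^\infty(V)}\leqslant|\lambda|A$), and, by Jensen, $e^{2\bar u}\leqslant A/|V|$ (so $\bar u$ is bounded above and the frozen term is bounded); thus $\|\Delta_p u\|_{L^\infty(V)}$ is bounded, and by the coercivity of $-\Delta_p$ on $X_0$ (a discrete Poincar\'e inequality) so is the oscillation $u-\bar u$; finally $\bar u\to-\infty$ would force the left side of the identity to $0$, which is impossible. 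So zeros of $H_2(t,\cdot)$ are uniformly bounded, and after enlarging $R_0$ we get $\operatorname{deg}(G,B_R,0)=\operatorname{deg}(\Psi,B_R,0)$ with $\Psi(u)=-\Delta_p u+\lambda e^{2\bar u}+\bar f$. Summing $\Psi(u)=0$ over $V$ forces $\lambda e^{2\bar u}+\bar f=0$, whence $\Delta_p u=0$ and $u$ is the single constant $u^{*}=c^{*}\mathbf{1}$, $c^{*}=\tfrac12\ln(-\bar f/\lambda)$.

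\emph{Step 2: the count.} Put $u=u^{*}+v$; since $\Delta_p$ and $\bar u$ ignore constants, $\Psi(u^{*}+v)=-\Delta_p v-\bar f(e^{2\bar v}-1)=:\Phi(v)$, whose only zero is $v=0$. On $X=\mathbb{R}\mathbf{1}\oplus X_0$ the map $\Phi$ is block diagonal: $\Phi(c\mathbf{1}+w)=h(c)\mathbf{1}-\Delta_p w$ with $h(c)=-\bar f(e^{2c}-1)\in\mathbb{R}\mathbf{1}$ and $-\Delta_p w\in X_0$. By translation invariance and excision we may compute the degree on a small box $(-\varepsilon,\varepsilon)\mathbf{1}\times B^{X_0}_\varepsilon$ about $v=0$, and the product formula gives $\operatorname{deg}(\Psi,B_R,0)=\operatorname{deg}(h,(-\varepsilon,\varepsilon),0)\cdot\operatorname{deg}(-\Delta_p,B^{X_0}_\varepsilon,0)$. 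Here $h\in C^1$, $h(0)=0$, $h'(0)=-2\bar f\neq0$, so the first factor is $\operatorname{sgn}(-\bar f)=\operatorname{sgn}\lambda$; and the linear homotopy $(1-t)(-\Delta_p w)+tw$ on $X_0$ has $w=0$ as its only zero for every $t$ (pair with $w$, use connectedness of $G$), so the second factor is $\operatorname{deg}(\mathrm{id}_{X_0},B^{X_0}_\varepsilon,0)=1$. Chaining the homotopies, $\operatorname{deg}(F,B_R,0)=\operatorname{sgn}\lambda$ for all $R\geqslant R_0$.

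The main obstacle is the a~priori estimate in Step~1 for $H_2$: because the frozen term $e^{2\bar u}$ is nonlocal, Theorem~\ref{t1.1} does not apply, and the mean $\bar u$ and the oscillation $u-\bar u$ must be bounded by separate arguments — the integral identity and Jensen for the mean, the coercivity of $-\Delta_p$ on $X_0$ for the oscillation. The other place where the nonlinearity of $\Delta_p$ bites is the final count: one cannot simply evaluate $\operatorname{sgn}\det DF$ at the zero (the usual device when $p=2$), since for $p\neq2$ the linearization of $\Delta_p$ is degenerate at constant functions (it vanishes there for $p>2$, and fails to exist for $p<2$); the block decomposition is precisely what quarantines this into the factor $-\Delta_p|_{X_0}$, which is then removed by a monotonicity homotopy to the identity, leaving an elementary one–dimensional sign.
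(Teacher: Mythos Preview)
Your argument is correct but follows a genuinely different route from the paper. Both proofs share the first homotopy $H_1$ (dropping the $\sigma$ from $e^u(e^u-\sigma)$), but diverge at the second step. The paper keeps the pointwise nonlinearity $\lambda e^{2u}$ and instead scales $f$ to $\varepsilon f$; it must then \emph{prove} that the endpoint equation $\Delta_p u=\lambda e^{2u}+\varepsilon f$ has a unique solution for small $\varepsilon$ --- this is the entire content of Section~3 (upper/lower solutions when $\lambda>0$, a constrained variational argument when $\lambda<0$, plus a contraction argument for uniqueness) --- and finally computes the degree via $\operatorname{sgn}\det DG_\varepsilon(u_\varepsilon,0)$, treating $-\partial\Delta_p u/\partial u$ as a weighted graph Laplacian at $u_\varepsilon$. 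Your homotopy $H_2$ instead freezes $e^{2u}$ to $e^{2\bar u}$ and $f$ to $\bar f$, so the endpoint map $\Psi$ has a single constant zero that is read off by inspection, with no existence theory needed; the degree then falls out of the product decomposition $X=\mathbb{R}\mathbf{1}\oplus X_0$ and the monotonicity homotopy $(1-t)(-\Delta_p)+t\,\mathrm{id}$ on $X_0$. Your approach is substantially shorter --- it bypasses all of Section~3 --- and, as you note, it sidesteps the differentiability issues of $\Delta_p$ for $p\neq 2$: the paper's Jacobian step tacitly assumes the weighted graph with edge weights $|u_\varepsilon(y)-u_\varepsilon(x)|^{p-2}$ stays connected (so that $\lambda_1>0$) and that $D\Delta_p$ exists at $u_\varepsilon$, neither of which is verified there. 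The trade-off is that your a~priori estimate for $H_2$ has to be done from scratch (Theorem~\ref{t1.1} does not cover the nonlocal term $e^{2\bar u}$), but your integral-identity/Jensen argument handles this cleanly.
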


As an application of the above theorem, we establish the following existence result for the Chern-Simons Higgs models involving $p$-Laplacian.

\begin{theorem}\label{t1.3}
Let $G=(V, E)$ be a connected finite graph, and $\lambda \int_{V} f d \mu<0$. Then equation \eqref{1.1} has a solution.
\end{theorem}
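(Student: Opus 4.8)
The plan is to deduce Theorem~\ref{t1.3} directly from Theorem~\ref{t1.2} by invoking the solution (existence) property of the topological degree. Since $G$ is a connected \emph{finite} graph, $X=L^{\infty}(V)$ is a finite-dimensional normed space (of dimension $|V|$), so $\deg(F,B_R,0)$ is the ordinary Brouwer degree, which enjoys the standard property: if $\deg(F,B_R,0)\neq 0$, then the equation $F(u)=0$ has at least one solution $u\in B_R$.

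First I would fix any $R\geqslant R_{0}$, where $R_{0}$ is the radius furnished by Theorem~\ref{t1.2}. The standing hypothesis $\lambda\int_{V}f\,d\mu<0$ is exactly the assumption needed there, so Theorem~\ref{t1.2} yields
\[
\deg(F,B_{R},0)=\sgn\lambda\in\{+1,-1\},
\]
which is in particular nonzero. (Implicit in the very statement of Theorem~\ref{t1.2} is that $0\notin F(\partial B_{R})$, so that the degree is well defined; this is guaranteed by Theorem~\ref{t1.1} applied with $\sigma=1$, which shows that every solution of \eqref{1.1} satisfies $\|u\|_{L^{\infty}(V)}\leqslant C$ for a constant $C=C(\lambda,f,|V|)$, so it suffices that $R_{0}>C$.) Now I would apply the existence property of the Brouwer degree: since $\deg(F,B_{R},0)\neq 0$, there is $u\in B_{R}\subset X$ with $F(u)=0$. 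Unwinding the definition \eqref{1.4} of $F$, this means
\[
-\Delta_{p}u+\lambda e^{u}(e^{u}-1)+f=0\quad\text{in }V,
\]
that is, $\Delta_{p}u=\lambda e^{u}(e^{u}-1)+f$, which is precisely equation \eqref{1.1}. Hence \eqref{1.1} has a solution.

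I do not expect any genuine obstacle at this stage: the substance of the argument is already carried by Theorem~\ref{t1.1} (the a priori bound, which makes the degree well defined and independent of $R\geqslant R_{0}$) and by Theorem~\ref{t1.2} (the homotopy computation evaluating the degree to $\sgn\lambda$). The only point to check in writing Theorem~\ref{t1.3} itself is that $F$ is a continuous self-map of the finite-dimensional space $X$, which is immediate since $x\mapsto\Delta_{p}u(x)$ and $u\mapsto\lambda e^{u}(e^{u}-1)+f$ are continuous, so that the Brouwer degree and its existence property legitimately apply.
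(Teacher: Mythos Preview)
Your proposal is correct and follows essentially the same route as the paper: use Theorem~\ref{t1.1} to ensure the degree is well defined, invoke Theorem~\ref{t1.2} to get $\deg(F,B_R,0)=\operatorname{sgn}\lambda\neq 0$, and then apply the existence (Kronecker) property of the Brouwer degree on the finite-dimensional space $X$ to produce a zero of $F$, i.e.\ a solution of \eqref{1.1}. The paper's proof is just a terser version of what you wrote.
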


\begin{remark}\label{r1.1}
As is well known, the fundamental idea of the topological degree theory is to reduce a complex equation (or system) to a simple equation (or system) through the homotopy invariance of the topological degree. In general, the existence of solutions to the simple equation (or system) is either clear or can be proven. Then one can calculate that the topological degree of some operator associated to the simple equation (or system) is not zero. In Section 4, after reducing equation \eqref{1.1} to equation \eqref{3.4}, it becomes essential to investigate the existence of solutions to equation \eqref{3.4}, which constitutes the primary focus of Section 3.
\end{remark}

\begin{remark}\label{r1.2}
Many researchers focus on graphs equipped with weights and measures. all edges $xy \in E$ carry weights $w_{xy} > 0$ satisfying $w_{xy} = w_{yx}$. The measure on $V$ is defined as $\mu: V\rightarrow (0,+\infty)$. In this paper, we assume that $w\equiv1$ and $\mu\equiv 1$, as the specific choices of weights and measures are not essential for our analysis.
\end{remark}

In the following we will use $C$ to represent the corresponding constants, which may not necessarily be equal in the context.

\medskip
The rest of the paper is organized as follows. In Section 2, we provide a priori estimate of solutions for equation \eqref{1.1} (Theorem \ref{t1.1}). In Section 3, we present some preliminary lemmas. Finally, in Section 4, we prove Theorem \ref{t1.2} and Theorem \ref{t1.3}.

\section{A priori estimate}
\setcounter{equation}{0}

First, we introduce some basic definitions and preliminary lemmas. For any $x\in V$, the gradient of $u$ at $x$ is defined as
\[\nabla u(x)=\big(\left(u\left(y_{1}\right)-u(x)\right), \cdots, \left(u\left(y_{l_x}\right)-u(x)\right)\big),\]
where $y_{1}, \cdots, y_{l_{x}}$ are all distinct points adjacent to $x$, and $l_x$ denotes the number of edges incident to $x$. Clearly, such an $l_{x}$ is unique and $\nabla u(x) \in \mathbb{R}^{l_{x}}$. The integral of $|\nabla u|^p$ on $V$ is given by
\[\int_{V} |\nabla u|^p d \mu=\frac{1}{2}\sum_{x \in V}\sum_{y\sim x} |u(y)-u(x)|^p.\]

Furthermore, we define a Sobolev space and its associated norm as
\[W^{1,p}(V)=\left\{u:V\to\mathbb{R}:\int_V (|u|^p+|\nabla u|^p)d\mu<+\infty\right\},\]
and
\[\|u\|_{W^{1,p}(V)}=\left(\int_V(|u|^p+|\nabla u|^p)d\mu\right)^{1/p},\]
respectively. Since $G=(V,E)$ is a finite graph, then $W^{1,p}(V)$ is exactly the set of all real functions on $V$, a finite dimensional linear space. Consequently, one has the following two lemmas, which were proved in \cite{Ge-2020}.

\begin{lemma}\label{l3.5} Let $G=(V, E)$ be a finite graph. The Sobolev space $W^{1,p}(V)$ is precompact. Namely, if $\{u_n\}$ is bounded in $W^{1,p}(V)$, then there exists some $u\in W^{1,p}(V)$ such that, up to a subsequence, $u_n\to u$ in $W^{1,p}(V)$.
\end{lemma}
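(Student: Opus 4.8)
The plan is to exploit the finite-dimensionality of $W^{1,p}(V)$. Since $G=(V,E)$ is a finite graph, every function $u:V\to\mathbb{R}$ is completely determined by the finite tuple of its vertex values $(u(x))_{x\in V}$, so $W^{1,p}(V)$ is canonically identified with the Euclidean space $\mathbb{R}^{|V|}$. Under this identification $\|\cdot\|_{W^{1,p}(V)}$ is a genuine norm on $\mathbb{R}^{|V|}$, and the assertion reduces to the statement that bounded sequences in a finite-dimensional normed space possess convergent subsequences, i.e.\ the Bolzano--Weierstrass (Heine--Borel) property. This is the structural fact I will make explicit.

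First I would extract pointwise limits. Suppose $\{u_n\}$ is bounded in $W^{1,p}(V)$, say $\|u_n\|_{W^{1,p}(V)}\leqs M$ for all $n$. Then
\[\sum_{x\in V}|u_n(x)|^p\leqs\int_V\big(|u_n|^p+|\nabla u_n|^p\big)d\mu=\|u_n\|_{W^{1,p}(V)}^p\leqs M^p,\]
so each real sequence $\{u_n(x)\}_n$ is bounded, uniformly in $x$. Because $V$ has only finitely many vertices, finitely many applications of the classical Bolzano--Weierstrass theorem (equivalently, a single application in $\mathbb{R}^{|V|}$) produce one subsequence $\{u_{n_k}\}$ together with a function $u:V\to\mathbb{R}$ such that $u_{n_k}(x)\to u(x)$ for every $x\in V$. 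The limit $u$ automatically belongs to $W^{1,p}(V)$, since on the finite set $V$ every real function lies in this space.

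It then remains to upgrade pointwise convergence to convergence in the $W^{1,p}(V)$ norm. Writing out the norm explicitly,
\[\|u_{n_k}-u\|_{W^{1,p}(V)}^p=\sum_{x\in V}|u_{n_k}(x)-u(x)|^p+\frac{1}{2}\sum_{x\in V}\sum_{y\sim x}\big|\big(u_{n_k}(y)-u(y)\big)-\big(u_{n_k}(x)-u(x)\big)\big|^p,\]
the right-hand side is a finite sum of terms, each of which tends to $0$ as $k\to\infty$ by the pointwise convergence at every vertex. Hence $\|u_{n_k}-u\|_{W^{1,p}(V)}\to0$, which is precisely the claimed precompactness.

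I do not expect a genuine obstacle here: the entire content of the lemma is the finite-dimensionality of $W^{1,p}(V)$. The only point requiring a modicum of care is selecting a single subsequence that converges simultaneously at all vertices, and this is unproblematic precisely because $V$ is finite.
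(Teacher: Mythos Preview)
Your argument is correct and is precisely the approach the paper indicates: it remarks just before the lemma that, since $V$ is finite, $W^{1,p}(V)$ is a finite-dimensional linear space, and then cites \cite{Ge-2020} in lieu of writing out the Bolzano--Weierstrass argument you supplied. There is nothing to add.
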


\begin{lemma}\label{l2.0}
Let $G=(V, E)$ be a finite graph. We use
 $$\bar{g}=\frac{1}{|V|}\int_V gd\mu$$
  to denote the integral mean of the function $g: V\rightarrow\mathbb{R}$. For all functions $u\in W^{1,p}(V)$ with $\bar{u}=0$, there exists a positive constant $C$ depending on $p$ and $|V|$ such that
\[\int_V|u|^pd\mu\leqslant C\int_V|\nabla u|^pd\mu.\]
\end{lemma}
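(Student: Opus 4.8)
The statement is the $p$-Poincar\'e inequality on a finite graph, and the plan is to prove it by a direct chaining argument along paths; this has the advantage of yielding a constant that manifestly depends only on $p$ and $|V|$. The argument uses that $G$ is connected, which is the standing hypothesis throughout the paper. Some hypothesis of this kind is necessary: on a disconnected graph a function that is constant on each component and has integral mean zero has vanishing gradient but nonzero $L^p$ norm, so the inequality would fail.

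First I would fix vertices $x,y\in V$ and pick a shortest path $x=x_0\sim x_1\sim\cdots\sim x_k=y$, so that $k\leqslant|V|-1$ and the edges $x_0x_1,\dots,x_{k-1}x_k$ are pairwise distinct. Telescoping together with the elementary inequality $\big(\sum_{i=0}^{k-1}a_i\big)^p\leqslant k^{p-1}\sum_{i=0}^{k-1}a_i^p$ (valid for $a_i\geqslant0$, $p>1$) gives
\[
|u(x)-u(y)|^p\leqslant k^{p-1}\sum_{i=0}^{k-1}|u(x_{i+1})-u(x_i)|^p\leqslant(|V|-1)^{p-1}\int_V|\nabla u|^p\,d\mu,
\]
where in the last inequality one uses that the $k$ edges of the path are distinct, so $\sum_{i=0}^{k-1}|u(x_{i+1})-u(x_i)|^p\leqslant\sum_{xy\in E}|u(y)-u(x)|^p=\int_V|\nabla u|^p\,d\mu$. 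Next, since $\bar u=0$, for each $x\in V$ one has $u(x)=u(x)-\bar u=\frac{1}{|V|}\sum_{y\in V}\big(u(x)-u(y)\big)$, so Jensen's inequality (convexity of $t\mapsto|t|^p$) yields
\[
|u(x)|^p\leqslant\frac{1}{|V|}\sum_{y\in V}|u(x)-u(y)|^p\leqslant(|V|-1)^{p-1}\int_V|\nabla u|^p\,d\mu.
\]
Summing over $x\in V$ then gives $\int_V|u|^p\,d\mu\leqslant|V|(|V|-1)^{p-1}\int_V|\nabla u|^p\,d\mu$, which is the claim with $C=|V|(|V|-1)^{p-1}$.

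There is no serious obstacle here; the only points requiring care are the use of connectedness (to guarantee that every pair of vertices is joined by a path of length at most $|V|-1$) and the bookkeeping of edge multiplicities, so that the Dirichlet energy $\int_V|\nabla u|^p\,d\mu$, and not twice it, appears in the bound. As an alternative I could argue by compactness: if the inequality failed there would exist $u_n$ with $\bar u_n=0$, $\int_V|u_n|^p\,d\mu=1$ and $\int_V|\nabla u_n|^p\,d\mu\to0$; by Lemma \ref{l3.5} a subsequence would converge in $W^{1,p}(V)$ to some $u$ with $\bar u=0$, $\int_V|u|^p\,d\mu=1$ and $\int_V|\nabla u|^p\,d\mu=0$, and the last identity together with connectedness would force $u$ to be constant, hence $u\equiv0$, a contradiction. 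I would nonetheless present the direct computation above, since it is constructive and produces an explicit constant depending only on $p$ and $|V|$.
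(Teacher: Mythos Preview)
The paper does not supply its own proof of this lemma; it simply records it (together with Lemma~\ref{l3.5}) as having been proved in \cite{Ge-2020}. Your direct chaining argument is correct and has the merit of producing an explicit constant $C=|V|(|V|-1)^{p-1}$ that depends only on $p$ and $|V|$, exactly as the statement asserts. The compactness alternative you sketch is also correct and is the approach one typically finds in the cited reference; note, however, that in that form the constant obtained depends a priori on the particular graph $G$ rather than only on $p$ and $|V|$, so your constructive argument actually matches the stated lemma more precisely.
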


Next, we proceed to the proof of Theorem \ref{t1.1}, following the main idea inspired by the proof of Theorem 1 in \cite{Li-CVPDE-2024}.

\begin{proof}[Proof of Theorem \ref{t1.1}]
Suppose that $u$ is a solution of equation \eqref{1.3}. By integrating on both sides of equation \eqref{1.3} on $V$, we have
\begin{equation}\label{2.1}
0=\int_{V} \Delta_p u d \mu=\lambda \int_{V} e^{u}\left(e^{u}-\sigma\right) d \mu+\int_{V} f d \mu.
\end{equation}
Next, we divide the proof into two steps.

\smallskip
\textbf{Step 1.} We prove that $u$ has a uniform upper bound.
Without loss of generality, we may assume $\max\limits _{V} u>0$. For otherwise, $u$ has already an upper bound 0. Note that
\[\left|\int_{u<0} e^{u}\left(e^{u}-\sigma\right) d \mu\right| \leqslant |V|.\]
Furthermore, by \eqref{2.1}, we can deduce that
\[\int_{u \geqslant 0} e^{u}\left(e^{u}-\sigma\right) d \mu \leqslant |V|+\frac{1}{|\lambda|}\left|\int_{V} f d \mu\right| \leqslant |V|\left(1+\frac{1}{|\lambda|}|\bar{f}|\right)=:a.\]
Here and hereafter, $\bar{f}=\frac{1}{|V|}\int_V f~d\mu$ denotes the integral mean of the function $f$. Together with
\[\int_{u \geqslant 0} e^{u}\left(e^{u}-\sigma\right) d \mu=\sum_{x \in V,u(x)\geqslant0} e^{u(x)}\left(e^{u(x)}-\sigma\right) \geqslant e^{\max\limits _{V} u}\left(e^{\max\limits _{V} u}-1\right),\]
it follows that
\begin{equation}\label{2.2}
\max _{V} u \leqslant \ln \frac{1+\sqrt{1+4 a}}{2}.
\end{equation}

\medskip
\textbf{Step 2.} We prove that $u$ has a uniform lower bound. For any $x \in V$, by \eqref{1.3} and \eqref{2.2}, keeping in mind $\sigma\in[0,1]$,  we have
\begin{equation*}
\begin{array}{ll}\vspace{0.5cm}\displaystyle|\Delta_p u(x)|&\leqslant|\lambda|\left|e^{u(x)}\left(e^{u(x)}-\sigma\right)\right|+|f(x)|
\\ \vspace{0.3cm}\displaystyle
&\leqslant|\lambda|\left(e^{2 u(x)}+e^{u(x)}\right)+|f(x)|\\ \vspace{0.3cm}\displaystyle
&\leqslant|\lambda|\big(\frac{\left(1+\sqrt{1+4 a}\right)^{2}}{4}+\frac{1+\sqrt{1+4 a}}{2}\big)+\|f\|_{L^{\infty}(V)}
\\ \displaystyle
&=: b .
\end{array}
\end{equation*}
Hence,
\begin{equation}\label{2.3}
\|\Delta_p u\|_{L^{\infty}(V)} \leqslant b.
\end{equation}

\medskip
We may assume $V=\left\{x_{1}, \cdots, x_{l}\right\}$, $u\left(x_{1}\right)=\max\limits _{V} u$, $u\left(x_{l}\right)=\min \limits_{V} u$, and without loss of generality $x_{1} x_{2}, x_{2} x_{3}, \cdots, x_{l-1} x_{l}$ is the shortest path connecting $x_{1}$ and $x_{l}$. Then, by Jensen's inequality, we can deduce that
\begin{align}
    0 \leqslant u\left(x_{1}\right)-u\left(x_{l}\right)&\leqslant \sum_{j=1}^{l-1}\left|u\left(x_{j}\right)-u\left(x_{j+1}\right)\right|\notag \\
    &\leqslant \big((l-1)^{p-1}\big)^{1/p}\left(\sum_{j=1}^{l-1} \left|u\left(x_{j}\right)-u\left(x_{j+1}\right)\right|^{p}\right)^{1/p}\notag \\
     &\leqslant\big(2(l-1)^{p-1}\big)^{1/p}\left(\int_{V}|\nabla u|^{p} d \mu\right)^{1/p}.\label{2.4} 
\end{align}
Since
\begin{eqnarray*}&&\int_{V}|\nabla u|^{p} d \mu+\int_V u \Delta_p u d \mu\\
 &=&\frac{1}{2}\sum_{x \in V} \sum_{y\sim x}|u(y)-u(x)|^{p-2}(u(y)-u(x))^{2}\\
&&+\sum_{x \in V} \sum_{y\sim x} u(x)|u(y)-u(x)|^{p-2}(u(y)-u(x))\\
&=&\frac{1}{2} \sum_{x \in V} \sum_{y\sim x}|u(y)-u(x)|^{p-2}(u(y)-u(x))(u(y)-u(x)+2 u(x))\\
&=&\frac{1}{2} \sum_{x \in V} \sum_{y\sim x}|u(y)-u(x)|^{p-2}(u(y)-u(x))(u(y)+u(x))\\
&=&0,
\end{eqnarray*}
by Lemma \ref{l2.0}, it holds
\[\begin{aligned}
\int_{V}|\nabla u|^{p} d \mu & =-\int_{V}(u-\bar{u}) \Delta_p u d \mu \\
& \leqslant\left(\int_{V}|u-\bar{u}|^{p} d \mu\right)^{1 / p}\left(\int_{V}|\Delta_p u|^{q} d \mu\right)^{1 / q} \\
& \leqslant\left(C \int_{V}|\nabla u|^{p} d \mu\right)^{1 / p}\left(\int_{V}|\Delta_p u|^{q} d \mu\right)^{1/q},
\end{aligned}\]
where $q$ is a positive integer satisfying $\dfrac{1}{p}+\dfrac{1}{q}=1$. Thus, we have
\begin{equation}\label{2.5}
\int_{V}|\nabla u|^{p} d \mu \leqslant C \int_{V}|\Delta_p u|^{q} d \mu \leqslant C\|\Delta_p u\|_{L^{\infty}(V)}^{q}|V|.
\end{equation}
Combining \eqref{2.4} and \eqref{2.5}, we can conclude
\begin{equation}\label{2.6}
\max _{V} u-\min _{V} u \leqslant \left(2(l-1)^{p-1}|V|C\right)^{1/p}\|\Delta_p u\|_{L^{\infty}(V)}^{q/p}.
\end{equation}
Then from \eqref{2.3}, we obtain
\begin{equation}\label{2.7}
\max _{V} u-\min _{V} u \leqslant c_{0}:=b^{q/p} \left(2(l-1)^{p-1}|V|C\right)^{1/p}.
\end{equation}
Returning to \eqref{2.1}, it follows that
\begin{equation}\label{2.8}
\int_{V} e^{u}\left(e^{u}-\sigma\right) d \mu=-\frac{1}{\lambda} \int_{V} f d \mu=-\frac{1}{\lambda}\bar{f}|V| =:\zeta.
\end{equation}
Since $\lambda\int_{V} f d \mu<0$, then $\zeta>0$. We claim that
\begin{equation}\label{2.9}
\max _{V} u>-A:=\ln \min \left\{1, \frac{\zeta}{4|V|}\right\}.
\end{equation}
For otherwise, $\max _{V} u \leqslant-A$, which, together with \eqref{2.8}, implies
\begin{equation*}
\begin{array}{ll}\vspace{0.4cm}\displaystyle \zeta=-\frac{1}{\lambda}\bar{f}|V|=\left|\int_{V} e^{u}\left(e^{u}-\sigma\right) d \mu\right|\leqslant \int_{V}\left(e^{2 u}+e^{u}\right) d \mu\\
\vspace{0.2cm}\displaystyle\qquad\qquad\leqslant\left(e^{2 \max\limits _{V} u}+e^{\max\limits _{V} u}\right)|V|\leqslant 2 e^{-A}|V| <\frac{\zeta}{2},
\end{array}
\end{equation*}
which is impossible. This confirms our claim. Inserting \eqref{2.9} into \eqref{2.7}, we have
\[-A-c_{0} \leqslant \min _{V} u \leqslant \max _{V} u \leqslant \ln \frac{1+\sqrt{1+4 a}}{2}.\]
\end{proof}

\section{Preliminary lemmas}
\setcounter{equation}{0}

In this section, we present some lemmas in preparation for the proof of Theorem 1.2 in the next section. Our main focus is on the existence of solutions to the equation
\begin{equation}\label{33}
\Delta_p u=\lambda e^{2u}+\varepsilon f,
\end{equation}
where $\varepsilon>0$ is a positive constant.

\subsection{The case $\lambda>0$}

\medskip
First, we introduce the definitions of the lower and upper solutions to equation \eqref{33}.

\medskip
We call a function $u_-$ a lower solution to equation \eqref{33} if, for all $x\in V$, it satisfies
\[\Delta_p u_-\geqslant\lambda e^{2u_-}+\varepsilon f.\]

Similarly, a function $u_+$ is called an upper solution to equation \eqref{33} if, for all $x\in V$, it satisfies
\[\Delta_p u_+\leqslant\lambda e^{2u_+}+\varepsilon f.\]

In the following, Lemma \ref{l4.1} concerns the method of upper and lower solutions, which serves as a preparation for the proof of Lemma \ref{l3.2}. Lemma \ref{l3.2} establishes the existence of a solution to equation \eqref{33} when $\lambda>0$.

\begin{lemma}\label{l4.1} Let $\lambda>0$ and $\int_V fd\mu<0$. If there exist lower and upper solutions $u_-$ and $u_+$ to the equation \eqref{33} satisfying $u_-\leqslant u_+$, then equation \eqref{33} has a solution $u$ satisfying $u_- \leqslant u \leqslant u_+$.
\end{lemma}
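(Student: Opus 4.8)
The plan is to use the standard monotone iteration (method of sub- and supersolutions), adapted to the nonlinear operator $\Delta_p$ on a finite graph. Because $G$ is finite, $W^{1,p}(V)$ is just a finite-dimensional space, so compactness issues are automatic; the real work is to build a monotone scheme whose comparison step actually goes through for $\Delta_p$. First I would fix a constant $K>0$, to be chosen large, and rewrite equation \eqref{33} as a fixed-point problem for the modified operator
\[
(\Delta_p - K)\,u = \lambda e^{2u} + \varepsilon f - K u =: g(u),
\]
where $g$ is chosen so that $t\mapsto \lambda e^{2t} - Kt$ is monotone \emph{decreasing} on the order interval $[\min_V u_-,\ \max_V u_+]$; since this interval is compact and $\lambda>0$, the choice $K \geq 2\lambda e^{2\max_V u_+}$ suffices. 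Then I would set up the iteration $u_0 = u_+$ (or $u_-$) and define $u_{n+1}$ as the unique solution of $(\Delta_p - K)u_{n+1} = g(u_n)$.

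The key steps, in order, are: (i) \textbf{Solvability of the modified linear-in-form problem.} Show that for every $h\in L^\infty(V)$ the equation $(\Delta_p - K) u = h$ has a unique solution. Existence follows because $u\mapsto \Delta_p u - Ku$ is (minus) the gradient of the strictly convex, coercive functional $u\mapsto \frac1p\int_V|\nabla u|^p\,d\mu + \frac K2\int_V u^2\,d\mu + \int_V hu\,d\mu$ on the finite-dimensional space of functions on $V$, so it has a unique minimizer; uniqueness also follows from strict monotonicity of $\Delta_p - K$ (the map $u\mapsto -\Delta_p u + Ku$ is strictly monotone because $-\Delta_p$ is monotone and $Ku$ is strictly so). (ii) \textbf{A comparison/maximum principle for $\Delta_p - K$.} Prove that if $(\Delta_p - K)v \leq (\Delta_p - K)w$ on $V$ then $v \geq w$ on $V$. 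This is the engine of the argument: evaluate at a vertex $x_0$ where $v-w$ attains its minimum; at that vertex every term $|v(y)-v(x_0)|^{p-2}(v(y)-v(x_0)) - |w(y)-w(x_0)|^{p-2}(w(y)-w(x_0))$ has a sign controlled by $(v-w)(y)-(v-w)(x_0)\geq 0$ together with the monotonicity of $s\mapsto |s|^{p-2}s$, forcing $-K\big((v-w)(x_0)\big)\leq 0$, hence $(v-w)(x_0)\geq 0$. (iii) \textbf{Monotone convergence.} Using (ii) and the decreasing monotonicity of $g$ on the order interval, show $u_- \leq u_{n+1}\leq u_n \leq \cdots \leq u_+$ by induction (the subsolution/supersolution inequalities give the first and last comparisons; the decreasing nature of $g$ flips differences correctly at each step). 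Since the $u_n$ are a bounded monotone sequence in the finite set of functions on $V$, they converge pointwise to some $u$ with $u_-\leq u \leq u_+$, and passing to the limit in $(\Delta_p - K)u_{n+1}=g(u_n)$ (all sums are finite, everything is continuous) shows $u$ solves \eqref{33}.

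The main obstacle is step (ii), the comparison principle for the shifted $p$-Laplacian: unlike the linear case one cannot subtract equations and read off a linear inequality, so the argument must be run pointwise at the minimum of $v-w$ and one has to check carefully that the sign of each edge contribution $|v(y)-v(x_0)|^{p-2}(v(y)-v(x_0)) - |w(y)-w(x_0)|^{p-2}(w(y)-w(x_0))$ is indeed nonnegative given $(v-w)(x_0)\le (v-w)(y)$ for all $y\sim x_0$ — this uses monotonicity of $t\mapsto|t|^{p-2}t$ but requires writing $v(y)-v(x_0) = (w(y)-w(x_0)) + \big((v-w)(y)-(v-w)(x_0)\big)$ and noting the increment is nonnegative. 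A secondary point worth stating explicitly is why the hypothesis $\int_V f\,d\mu<0$ (and $\lambda>0$) is used; in the present scheme it is not strictly needed for the iteration itself but it guarantees the order interval is nonempty in the intended applications and is consistent with the a priori bound, so I would simply carry it along. Once (i)–(iii) are in place the conclusion $u_-\le u \le u_+$ is immediate from the construction.
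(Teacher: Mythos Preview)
Your proposal is correct and follows essentially the same monotone-iteration scheme as the paper: both shift the $p$-Laplacian by a positive weight (the paper takes the function $k=2\lambda e^{2u_+}$, you take the constant $K\ge 2\lambda e^{2\max_V u_+}$), iterate from $u_0=u_+$, and use an order-reversing comparison principle for $\Delta_p-K$ together with the monotonicity of $t\mapsto \lambda e^{2t}-Kt$ on the order interval to get a decreasing sequence trapped between $u_-$ and $u_+$. The only cosmetic difference is that the paper cites the solvability and comparison principle from \cite{Ge-2020} (their Theorem~2.1), whereas you sketch them directly via coercivity/strict monotonicity and a pointwise minimum-principle argument; your observation that the hypothesis $\int_V f\,d\mu<0$ is not actually used in the iteration is also correct.
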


\begin{proof}
Set $k=2\lambda e^{2u_+}> 0$. We define
\[L\varphi\equiv \Delta_p\varphi-k\varphi.\]
Set $u_0= u_+$, and for each integer $n\geqslant0$,
\[Lu_{n+1}=\lambda e^{2u_{n}}+\varepsilon f-ku_n.\]
We claim that
\[u_-\leqslant u_{n+1} \leqslant u_n \leqslant \cdots \leqslant u_1 \leqslant u_0 = u_+.\]
To prove the above claim, we just need to prove
\[Lu_-\geqslant Lu_{n+1}\geqslant Lu_{n}\geqslant\cdots \geqslant Lu_1 \geqslant Lu_0 = Lu_+,\]
according to the order-preserving property of $L$ in Theorem 2.1 in \cite{Ge-2020}, which is, if $Lu\geqslant Lv$, then $u\leqslant v$. We prove this by induction. It is straightforward to deduce that
\[L u_1 - L u_0=\lambda e^{2u_0}+\varepsilon f-ku_0-Lu_+ \geqslant 0.\]
By the mean value theorem, we have
\begin{equation*}
\begin{array}{ll}\vspace{0.3cm}\displaystyle L u_--Lu_{1} =( \Delta_p u_- - k u_-)-(\lambda e^{2u_+}+\varepsilon f- ku_+)\\ \vspace{0.3cm}\displaystyle
\qquad\qquad\quad\geqslant k ( u _ { + } - u _ { - }) -\lambda ( e^{2 u_+ }-e ^ {2 u - })\\ \vspace{0.3cm}\displaystyle
\qquad\qquad\quad= k( u_+ - u_-) -2\lambda e^{2\xi}(u_+ - u_-)\\ \vspace{0.3cm}\displaystyle
\qquad\qquad\quad= 2\lambda(u_+ -u_-)(e^{2u_+}-e^{2\xi})\\ \displaystyle
\qquad\qquad\quad\geqslant 0,
\end{array}
\end{equation*}
where $u_-\leqslant \xi \leqslant u_+$. Thus, we have
\[L u_-\geqslant L u_1\geqslant L u_0= L u_+.\]
Suppose
\[L u_-\geqslant L u_n\geqslant L u_{n-1}\geqslant L u_+,\]
for some $n\geqslant1$. Then
\begin{equation*}
\begin{array}{ll}\vspace{0.3cm}\displaystyle Lu_{n+1}-Lu_n=\lambda(e^{2u_n}-e^{2u_{n-1}})-k(u_n-u_{n-1}) \\ \vspace{0.3cm}\displaystyle
\qquad\qquad\quad\quad=2\lambda e^{2\xi}(u_{n}-u_{n-1})-k(u_n-u_{n-1})\\ \vspace{0.3cm}\displaystyle
\qquad\qquad\quad\quad=2\lambda (e^{2\xi}-e^{2u_+})(u_{n}-u_{n-1})\\ \displaystyle
\qquad\qquad\quad\quad\geqslant0,
\end{array}
\end{equation*}
where $u_n\leqslant\xi\leqslant u_{n-1}\leqslant u_+$. Similarly,
\begin{equation*}
\begin{array}{ll}\vspace{0.3cm}\displaystyle Lu_--Lu_{n+1}=(\Delta_pu_- -ku_-)-(\lambda e^{2u_n}+\varepsilon f-ku_n)\\ \vspace{0.3cm}\displaystyle
\qquad\qquad\quad\geqslant k(u_n-u_-)-\lambda(e^{2u_n}-e^{2u_-})\\ \vspace{0.3cm}\displaystyle
\qquad\qquad\quad= k(u_n-u_-)-2\lambda e^{2\xi}(u_n-u_-)\\ \vspace{0.3cm}\displaystyle
\qquad\qquad\quad=2\lambda (e^{2u_+}-e ^{2\xi})(u_n-u_-)\\ \displaystyle
\qquad\qquad\quad\geqslant 0 ,
\end{array}
\end{equation*}
where $u_-\leqslant \xi \leqslant u_n\leqslant u_+$. Hence we obtain
\[L u_-\geqslant Lu_{n+1}\geqslant Lu_n\geqslant Lu_+.\]
By induction, the claim above has been proven. Since $V$ is finite, by Lemma \ref{l3.5}, there exists a $u$, up to a subsequence, $u_n\rightarrow u$ as $n \rightarrow \infty$ uniformly on $V$. Note that
\[Lu_{n+1}=\Delta_pu_{n+1}-k u_{n+1}=\lambda e^{2u_n}+\varepsilon f-ku_n,\]
let $n\rightarrow\infty$, we have
\[\Delta_p u=\lambda e^{2u}+\varepsilon f.\]
Hence, $u$ is a solution to the equation \eqref{33}.
\end{proof}

\begin{lemma}\label{l3.2} Let $G=(V, E)$ be a finite graph. Assume that $\lambda$ and $f$ satisfy $\lambda>0$ and $\int_V fd\mu<0$, respectively. Then, for any given $\varepsilon>0$, equation \eqref{33} has a solution.
\end{lemma}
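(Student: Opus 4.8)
The plan is to use the method of upper and lower solutions established in Lemma~\ref{l4.1}. Since $\lambda>0$ and $\int_V f\,d\mu<0$, it suffices to construct a lower solution $u_-$ and an upper solution $u_+$ to equation~\eqref{33} with $u_-\leqslant u_+$; then Lemma~\ref{l4.1} produces a solution sandwiched between them. The key observation is that both $\Delta_p$ applied to a constant vanishes and $e^{2u}$ is monotone, so constant functions are natural candidates, but since $f$ changes sign this will not directly work; instead I expect to take $u_+$ to be a sufficiently negative constant and $u_-$ to be a solution of a linear-type auxiliary problem shifted downward.

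First I would look for an upper solution. Choosing $u_+\equiv M$ a constant, the inequality $\Delta_p u_+\leqslant \lambda e^{2u_+}+\varepsilon f$ becomes $0\leqslant \lambda e^{2M}+\varepsilon f(x)$ for all $x\in V$. Since $\lambda>0$, for $M$ large enough (depending on $\varepsilon$, $\min_V f$ and $\lambda$) we have $\lambda e^{2M}\geqslant \varepsilon\,\|f\|_{L^\infty(V)}\geqslant -\varepsilon f(x)$, so any such large constant $M$ is an upper solution. This step is essentially free.

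Next I would construct a lower solution. The natural idea is to solve the linear Poisson-type equation for the $2$-Laplacian or, more appropriately here, to use the solvability of $\Delta_p w = \varepsilon f - \varepsilon\bar f$ (note $\int_V(\varepsilon f-\varepsilon\bar f)\,d\mu=0$, so by the standard theory of the $p$-Laplacian on finite graphs — e.g. Theorem~2.1 in \cite{Ge-2020} — a solution $w$ exists, unique up to an additive constant). Since $\bar f<0$ we have $\varepsilon\bar f<0$; then I claim $u_-:=w+t$ is a lower solution for $t$ a sufficiently negative constant: indeed $\Delta_p u_- = \Delta_p w = \varepsilon f-\varepsilon\bar f$, so the requirement $\Delta_p u_-\geqslant \lambda e^{2u_-}+\varepsilon f$ reduces to $-\varepsilon\bar f \geqslant \lambda e^{2(w+t)}$, and since $-\varepsilon\bar f>0$ is a fixed positive number while $\lambda e^{2(w+t)}\to 0$ uniformly as $t\to-\infty$, this holds once $t$ is negative enough. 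Finally, after fixing $u_+\equiv M$, I decrease $t$ further if necessary so that $w+t\leqslant M$ on $V$, which guarantees $u_-\leqslant u_+$. Then Lemma~\ref{l4.1} applies and yields a solution.

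The main obstacle is the construction of the lower solution: it hinges on having a solvability result for $\Delta_p w = g$ when $\int_V g\,d\mu=0$ on a connected finite graph (and on the fact that such $w$ exists and is bounded, which is automatic since $V$ is finite). This is where the nonlinearity of $\Delta_p$ enters, but it is precisely the content of the order-preserving/solvability theory of \cite{Ge-2020} already invoked in the proof of Lemma~\ref{l4.1}, so I would cite it rather than reprove it. An alternative, avoiding even this, would be to try a two-constant construction, but because $f$ is sign-changing a single constant cannot serve as a lower solution unless $\lambda e^{2u_-}+\varepsilon\max_V f\leqslant 0$, which is impossible for $\lambda>0$; hence the shifted-solution approach seems unavoidable.
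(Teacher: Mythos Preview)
Your argument is correct and follows essentially the same route as the paper: both construct the lower solution by shifting a solution of $\Delta_p w=\varepsilon f-\varepsilon\bar f$ far downward and then invoke Lemma~\ref{l4.1}. The only cosmetic difference is the upper solution---the paper takes $u_+=v_\varepsilon+b$ with $b$ large (so $u_-\leqslant u_+$ is automatic), whereas you take a large constant $u_+\equiv M$ and adjust $t$ afterward; both choices work.
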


\begin{proof}
~Let $v_\varepsilon$ be a solution of the equation (The solvability of the equation can be found in \cite{Ge-2020})
\[\Delta_pv=\varepsilon f-\varepsilon\bar{f}.\]

Consider $u_-=v_\varepsilon-A$ with $A>0$, we have
\begin{equation*}
\begin{array}{ll}\vspace{0.3cm}\displaystyle \Delta_pu_--\lambda e^{2u_-}-\varepsilon f=\Delta_pv_\varepsilon-\lambda e^{2(v_\varepsilon-A)}-\varepsilon f
\\ \vspace{0.3cm}\displaystyle
\qquad\qquad\qquad\qquad~=-\varepsilon\bar{f}-\lambda e^{2v_\varepsilon}e^{-2A}\\ \displaystyle
\qquad\qquad\qquad\qquad~\to-\varepsilon\bar{f}, \quad \hbox{as}~A\to+\infty,
\end{array}
\end{equation*}
uniformly for $x\in V$. Noting that $\bar{f}<0$, we can find a sufficiently large $A$ such that
\[\Delta_pu_--\lambda e^{2u_-}-\varepsilon f\geqslant0.\]
Therefore, equation \eqref{33} has a lower solution $u_-$.

\medskip
Consider $u_+ =v_\varepsilon+b$ with $b>0$. Noting that $\lambda>0$, we have
\begin{equation*}
\begin{array}{ll}\vspace{0.3cm}\displaystyle \Delta_pu_+-\lambda e^{2u_+}-\varepsilon f
=\Delta_pv_\varepsilon-\lambda e^{2(v_\varepsilon+b)}-\varepsilon f \\ \vspace{0.3cm}\displaystyle
\qquad\qquad\qquad\qquad~=-\varepsilon\bar{f}-\lambda e^{2v_\varepsilon}e^{2b}\\ \displaystyle
\qquad\qquad\qquad\qquad~\to-\infty, \quad \hbox{as}~b\to+\infty,
\end{array}
\end{equation*}
uniformly for $x\in V$. We can therefore find a sufficiently large $b$ such that
\[\Delta_pu_+-\lambda e^{2u_+}-\varepsilon f\leqslant0.\]
Hence, equation \eqref{33} has an upper solution $u_+$. By Lemma \ref{l4.1}, we can conclude that equation \eqref{33} has a solution $u$ satisfying $u_-\leqslant u\leqslant u_+$. The proof is completed.
\end{proof}

\subsection{The case $\lambda<0$}

\medskip
In this subsection, we consider the existence of solutions to equation \eqref{33} when $\lambda<0$. The main result is stated in Lemma \ref{l3.1}, while Lemmas \ref{l3.4} and \ref{l3.3} serve as preparatory results for its proof.

\medskip
For all $u\in W^{1,p}(V)$, let
\[\bar{u}_f=\frac{\int_Vfud\mu}{\int_Vfd\mu}=\frac{1}{\bar{f}|V|}\int_Vfud\mu.\]
We begin with the following result, whose proof is similar to that of Lemma \ref{l2.0} and is therefore omitted.

\begin{lemma}\label{l3.4} Let $G=(V, E)$ be a finite graph. For all functions $u\in W^{1,p}(V)$ with $\bar{u}_f=0$, there exists a positive constant $C$ depending on $p$, $f$ and $|V|$ such that
\[\int_V|u|^pd\mu\leqslant C\int_V|\nabla u|^pd\mu.\]
\end{lemma}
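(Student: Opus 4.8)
The plan is to imitate the proof of Lemma \ref{l2.0} almost verbatim, the only change being that the Poincaré-type inequality is now measured against the $f$-weighted mean $\bar{u}_f$ instead of the ordinary mean $\bar{u}$. The key observation is that on a finite graph $W^{1,p}(V)$ is finite dimensional, so all we really need is a compactness-plus-contradiction argument combined with the fact that $\int_V |\nabla u|^p\,d\mu=0$ forces $u$ to be constant.

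First I would argue by contradiction: suppose the claimed inequality fails for every constant $C$. Then there is a sequence $u_n\in W^{1,p}(V)$ with $\bar{(u_n)}_f=0$ and $\int_V |u_n|^p\,d\mu > n\int_V |\nabla u_n|^p\,d\mu$. Normalizing so that $\int_V |u_n|^p\,d\mu=1$ (possible since the left side is positive), we get $\int_V |\nabla u_n|^p\,d\mu < 1/n \to 0$, so $\{u_n\}$ is bounded in $W^{1,p}(V)$. By Lemma \ref{l3.5} (precompactness), up to a subsequence $u_n\to u$ in $W^{1,p}(V)$ for some $u$. Passing to the limit, $\int_V |u|^p\,d\mu=1$ and $\int_V |\nabla u|^p\,d\mu=0$.

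Next I would use that $\int_V |\nabla u|^p\,d\mu = \frac12\sum_{x\in V}\sum_{y\sim x}|u(y)-u(x)|^p = 0$ implies $u(y)=u(x)$ whenever $y\sim x$; since $G$ is connected this forces $u\equiv c$ for some constant $c$. Then the constraint $\bar{u}_f = \frac{1}{\bar{f}|V|}\int_V f u\,d\mu$ passes to the limit (it is a continuous linear functional on the finite-dimensional space, and note $\bar f\neq 0$ because $\lambda\int_V f\,d\mu<0$ is assumed throughout, so in particular $\int_V f\,d\mu\neq 0$), giving $\bar{u}_f = \frac{c\int_V f\,d\mu}{\int_V f\,d\mu}=c=0$. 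Hence $u\equiv 0$, contradicting $\int_V |u|^p\,d\mu=1$. This contradiction establishes the inequality, and tracking the dependence shows $C$ depends only on $p$, $f$, and $|V|$ (the graph structure being fixed).

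I do not expect any genuine obstacle here; the statement is routine once one has Lemma \ref{l3.5} in hand, which is exactly why the authors omitted it. The only point requiring a line of care is the non-degeneracy of the weighted mean, i.e. that $\bar{u}_f$ is well defined and that the functional $u\mapsto\bar{u}_f$ is continuous and vanishes on constants only when evaluated at $0$ — all of which follow immediately from $\int_V f\,d\mu\neq 0$. One could alternatively give a direct proof paralleling the explicit computation in \cite{Ge-2020} for Lemma \ref{l2.0}, writing $u = (u-\bar u_f) + \bar u_f$ and estimating, but the compactness route above is the cleanest.
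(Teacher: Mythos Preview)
Your proposal is correct. The paper itself omits the proof entirely, remarking only that it is ``similar to that of Lemma~\ref{l2.0}'' (which in turn is quoted from \cite{Ge-2020}); your compactness-plus-contradiction argument is precisely the standard route for such Poincar\'e-type inequalities on finite-dimensional function spaces and is fully consistent with what the authors have in mind.
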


\begin{lemma}\label{l3.3} Let $G=(V, E)$ be a finite graph. For any $\beta\geqslant0$ and $\alpha\geqslant0$, there exist two constants $C^*$ (depending on $p$, $f$ and $|V|$) and $C_*$ (depending on $\alpha$, $\beta$, $p$, $f$ and $|V|$) such that for all functions $u$ satisfying $\int_V|\nabla u|^pd\mu\leqslant1$ and $\bar{u}_f=0$, it holds that
\[\|u\|_\infty\leqslant C^*,\]
and
\[\int_V e^{\beta |u|^\alpha}d\mu\leqslant C_*.\]
\end{lemma}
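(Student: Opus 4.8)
The plan is to reduce the statement to an $L^p$-estimate coming from Lemma \ref{l3.4} and then to exploit the finiteness of $V$, which makes the $L^\infty$-norm and the exponential integral harmless once an $L^p$-bound is in hand.

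First I would invoke Lemma \ref{l3.4}: since $\bar{u}_f = 0$ and $\int_V |\nabla u|^p d\mu \leqslant 1$, there is a constant $C$ depending only on $p$, $f$ and $|V|$ with
\[\int_V |u|^p d\mu \leqslant C \int_V |\nabla u|^p d\mu \leqslant C.\]
Next I would pass to the uniform bound: because $G$ is finite, for every $x \in V$ one has $|u(x)|^p \leqslant \sum_{y \in V} |u(y)|^p = \int_V |u|^p d\mu \leqslant C$, so
\[\|u\|_\infty = \max_{x \in V} |u(x)| \leqslant C^{1/p} =: C^*,\]
and $C^*$ depends only on $p$, $f$, $|V|$, as required --- in particular not on $\alpha$ or $\beta$.

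For the exponential bound I would again use that $V$ is finite: for each $x \in V$, $\beta |u(x)|^\alpha \leqslant \beta (C^*)^\alpha$, hence
\[\int_V e^{\beta |u|^\alpha} d\mu = \sum_{x \in V} e^{\beta |u(x)|^\alpha} \leqslant |V|\, e^{\beta (C^*)^\alpha} =: C_*,\]
where $C_*$ now depends on $\alpha$, $\beta$, $p$, $f$, $|V|$, which is exactly the claimed dependence. (When $\alpha = 0$ one reads $|u(x)|^0 = 1$, so the bound is simply $|V| e^\beta$.)

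I do not expect any real obstacle here: the substance is entirely in Lemma \ref{l3.4}, and the remaining steps are the trivial inequality $\|u\|_\infty^p \leqslant \|u\|_{L^p(V)}^p$ valid on a finite graph together with monotonicity of the exponential. The only point requiring a little care is bookkeeping the dependence of the constants, so that $C^*$ is seen to be independent of $\alpha$ and $\beta$ while $C_*$ is allowed to depend on them.
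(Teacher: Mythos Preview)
Your proof is correct and follows essentially the same route as the paper: invoke Lemma~\ref{l3.4} to control $\int_V|u|^p d\mu$, use finiteness of $V$ to pass to an $L^\infty$ bound, and then bound the exponential integral trivially. The only cosmetic difference is that the paper passes through $\|u\|_\infty \leqslant \int_V |u|\,d\mu$ and then applies H\"older to reach $\|u\|_{L^p}$, picking up an extra factor $|V|^{1/q}$, whereas your direct inequality $\|u\|_\infty^p \leqslant \int_V |u|^p d\mu$ is slightly cleaner.
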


\begin{proof}
For any function $u$ satisfying $\int_V|\nabla u|^pd\mu\leqslant 1$ and $\bar{u}_f=0$, by Lemma \ref{l3.4}, we have
\begin{equation*}\label{3.01}
\begin{array}{ll}\vspace{0.3cm}\displaystyle \|u\|_\infty\leqslant\sum_{x\in V}|u(x)|=\int_V|u|d\mu\leqslant\left(\int_V|u|^pd\mu\right)^{1/p}\left(\int_V1^qd\mu\right)^{1/q}\\ \displaystyle
\qquad\quad\leqslant\left( C\int_V|\nabla u|^pd\mu\right)^{1/p}|V|^{1/q}\leqslant C^{1/p}|V|^{1/q}:=C^*,
\end{array}
\end{equation*}
where $q$ is a positive integer satisfying $\dfrac{1}{p}+\dfrac{1}{q}=1$. Since $\beta\geqslant0$ and $\alpha\geqslant0$, we have
\[\int_V e^{\beta |u|^\alpha}d\mu\leqslant\int_V e^{\beta \|u\|_\infty^\alpha}d\mu\leqslant e^{\beta {C^*}^{\alpha}}|V|:=C_*.\]
\end{proof}

\begin{lemma}\label{l3.1} Let $G=(V, E)$ be a finite graph. Assume that $\lambda$ and $f$ satisfy $\lambda<0$ and $\bar{f}>0$, respectively. Then, for any given $\varepsilon>0$, equation \eqref{33} has a solution.
\end{lemma}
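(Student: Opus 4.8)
The plan is to use a variational approach, since for $\lambda<0$ the upper/lower solution trick is less convenient (the sign of $\lambda e^{2u}$ now works against monotone iteration). Equation \eqref{33} with $\lambda<0$ is the Euler--Lagrange equation of the functional
\[
J(u)=\frac{1}{p}\int_V|\nabla u|^p\,d\mu-\frac{\lambda}{2}\int_V e^{2u}\,d\mu+\varepsilon\int_V f u\,d\mu,
\]
defined on the finite-dimensional space $W^{1,p}(V)$. Because $\lambda<0$, the middle term is $-\frac{\lambda}{2}\int_V e^{2u}\,d\mu=\frac{|\lambda|}{2}\int_V e^{2u}\,d\mu\geqslant 0$, so all three competing pieces contribute nonnegatively except the linear term $\varepsilon\int_V fu\,d\mu$. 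First I would check that $J$ is $C^1$ on $W^{1,p}(V)$ (immediate, as everything is a finite sum of smooth functions) and that critical points of $J$ are exactly weak, hence classical, solutions of \eqref{33}.

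The heart of the argument is coercivity of $J$. I would decompose any $u\in W^{1,p}(V)$ as $u=w+\bar u_f$ where $w$ has $\bar w_f=0$ (using the $f$-weighted mean from the paragraph before Lemma \ref{l3.4}); this decomposition is chosen precisely so that the linear term becomes $\varepsilon\int_V f u\,d\mu=\varepsilon\,\bar u_f\int_V f\,d\mu=\varepsilon\,\bar u_f\,\bar f|V|$, i.e. it depends only on the scalar $\bar u_f$. Then, writing $t=\bar u_f$ and normalizing $w=s v$ with $\int_V|\nabla v|^p\,d\mu=1$ and $s=\left(\int_V|\nabla w|^p\,d\mu\right)^{1/p}\geqslant 0$, I would estimate:
\[
J(u)\geqslant \frac{s^p}{p}+\frac{|\lambda|}{2}\int_V e^{2sv+2t}\,d\mu+\varepsilon\,\bar f|V|\,t.
\]
By Lemma \ref{l3.3}, along $\bar v_f=0$, $\int_V|\nabla v|^p\,d\mu=1$ one has $\|v\|_\infty\leqslant C^*$, so $e^{2sv+2t}\geqslant e^{-2C^* s+2t}$ pointwise and, since the graph is connected and $v$ has weighted mean zero, at some vertex $x_0$ one has $v(x_0)\geqslant 0$, giving $\int_V e^{2sv+2t}\,d\mu\geqslant e^{2t}$. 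Hence
\[
J(u)\geqslant \frac{s^p}{p}+\frac{|\lambda|}{2}e^{2t}+\varepsilon\,\bar f|V|\,t.
\]
Because $\bar f>0$, the map $t\mapsto \frac{|\lambda|}{2}e^{2t}+\varepsilon\bar f|V|\,t$ is bounded below and tends to $+\infty$ as $|t|\to\infty$ (as $t\to+\infty$ the exponential dominates; as $t\to-\infty$ the linear term with positive coefficient $\varepsilon\bar f|V|$ dominates since $e^{2t}\to0$). Together with the $s^p/p$ term this gives $J(u)\to+\infty$ as $\|u\|_{W^{1,p}(V)}\to\infty$ (note $\|u\|_{W^{1,p}(V)}$ is controlled by $s$ and $|t|$ via Lemma \ref{l3.4} and the triangle inequality). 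Thus $J$ is coercive and, being continuous on the finite-dimensional space $W^{1,p}(V)$, attains its infimum at some $u_\varepsilon$, which is the desired solution.

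I expect the main obstacle to be the careful bookkeeping in the coercivity estimate: one must handle the case $s\to\infty$ with $t$ bounded (where the gradient term saves us), the case $|t|\to\infty$ with $s$ bounded (where the $e^{2t}$/linear-in-$t$ competition saves us, crucially using $\bar f>0$ for the $t\to-\infty$ direction), and the mixed regime, all while only using the pointwise lower bound $\|v\|_\infty\leqslant C^*$ from Lemma \ref{l3.3} rather than any upper bound on $v$ (which is why I keep the integral $\int_V e^{2sv+2t}\,d\mu$ and bound it below by a single good vertex instead of trying to bound it above). A minor point to verify is that $s^p/p$ with $p>1$ genuinely dominates any polynomial growth coming from cross terms — but since the only other $s$-dependence sits inside an exponential with a favorable sign, there are in fact no bad cross terms, so this is straightforward.
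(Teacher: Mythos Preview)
Your functional $J$ has the wrong Euler--Lagrange equation. Using the identity $\int_V|\nabla u|^p\,d\mu=-\int_V u\,\Delta_p u\,d\mu$ from the paper, differentiation of $J(u+th)$ at $t=0$ yields $-\Delta_p u-\lambda e^{2u}+\varepsilon f=0$, i.e.\ $\Delta_p u=-\lambda e^{2u}+\varepsilon f$, which is \eqref{33} with $\lambda$ replaced by $-\lambda$. The correct energy for \eqref{33} is $\tilde J(u)=\frac{1}{p}\int_V|\nabla u|^p\,d\mu+\frac{\lambda}{2}\int_V e^{2u}\,d\mu+\varepsilon\int_V fu\,d\mu$, and for $\lambda<0$ this is unbounded below along constants $u\equiv c\to+\infty$, so direct minimization cannot work. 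In fact your own coercivity step already breaks: you claim $t\mapsto\frac{|\lambda|}{2}e^{2t}+\varepsilon\bar f|V|\,t\to+\infty$ as $t\to-\infty$, but since $\bar f>0$ the linear term drives this to $-\infty$ while $e^{2t}\to0$. Hence your $J$ is likewise unbounded below (take $u\equiv c\to-\infty$), which is consistent with the fact that its Euler--Lagrange equation, having positive coefficient $-\lambda>0$ in front of $e^{2u}$, admits no solution when $\bar f>0$ (integrate it over $V$).

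The missing idea is a constraint that removes the bad direction $u\mapsto u+c$. The paper minimizes $I(u)=\frac{1}{p}\int_V|\nabla u|^p\,d\mu+\varepsilon\int_V fu\,d\mu$ on the set $\mathcal B=\{u:\lambda\int_V e^{2u}\,d\mu+\varepsilon\bar f|V|=0\}$, which is exactly the necessary condition obtained by integrating \eqref{33}. On $\mathcal B$ the constant shift is frozen, $I$ is shown to be bounded below via Lemmas~\ref{l3.4} and~\ref{l3.3} together with Young's inequality, a minimizer exists by finite-dimensional compactness, and the Lagrange multiplier is then identified as $\tfrac12$ by integrating the resulting equation. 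If you wish to salvage a variational proof, this constraint (or an equivalent device such as a Nehari-type manifold) is the ingredient you are missing.
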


\begin{proof}
Define a set
\[\mathcal{B}= \left\{u\in W^{1,p}(V): \lambda \int_V e^{2u}d\mu+\varepsilon\bar{f}|V|=0\right\}.\]
We claim that $\mathcal{B}\neq \emptyset$.

\medskip
Next, we prove the above claim. Define a function
\[u_l(x)=l, \quad \forall ~x\in V.\]
Since $\lambda<0$, then we have
\[ -\lambda\int_V e^{2u_l}d\mu=-\lambda e^{2l}|V|\to +\infty,\quad\hbox{as}~~l\to+\infty.\]
We also define $\tilde{u}_l\equiv -l$, which leads to
\[-\lambda\int_V e^{2\tilde{u}_l}d\mu =-\lambda e^{-2l}|V|\to0, \quad \hbox{as}~l \to+ \infty. \]
Hence, there exists a sufficiently large $l$ such that
\[-\lambda\int_V e^{2u_l}d\mu >\varepsilon\bar{f}|V|\quad \hbox{and}\quad -\lambda\int_V e^{2\tilde{u}_l}d\mu <\varepsilon\bar{f}|V|.\]
We define a function $\phi:\mathbb{R}\rightarrow\mathbb{R}$ by
\[\phi(t)= -\lambda\int_V e^{2\left(tu_l+(1-t)\tilde{u}_l\right)}d\mu .\]
Then,
\[\phi(0) < \varepsilon\bar{f} |V| < \phi(1) ,\]
and thus there exists a $t_0\in(0,1)$ such that $\phi(t_0)=\varepsilon\bar{f}|V|$. Therefore, we have shown that $\mathcal{B} \neq \emptyset$.

\medskip
We solve \eqref{33} by minimizing the functional
\[I(u)=\frac{1}{p} \int_V |\nabla u|^p d\mu + \varepsilon\int_V fu d\mu\]
on $\mathcal{B}$. For any $u\in\mathcal{B}$, we have
\[\int_V e^{2u-2\bar{u}_f}d\mu=e^{-2\bar{u}_f}\int_V e^{2u}d\mu=e^{-2\bar{u}_f}\left(-\frac{\varepsilon}{\lambda}\bar{f}|V|\right),\]
and thus
\begin{equation*}\label{3.01}
\begin{array}{ll}\vspace{0.3cm}\displaystyle I(u)=\frac{1}{p} \int_V |\nabla u|^pd\mu+\varepsilon\int_V fu d\mu
-\frac{1}{2}\varepsilon\int_V f\ln\left(\int_V e^{2u-2\bar{u}_f}d\mu\right)d\mu\\ \vspace{0.3cm}\displaystyle
\qquad\quad+\frac{1}{2}\varepsilon\int_V f\ln\left(\int_V e^{2u-2\bar{u}_f}d\mu\right)d\mu\\ \vspace{0.3cm}\displaystyle
\qquad=\frac{1}{p} \int_V |\nabla u|^pd\mu-\frac{1}{2}\varepsilon\int_V f\ln\left(\int_V e^{2u-2\bar{u}_f}d\mu\right)d\mu\\ \vspace{0.3cm}\displaystyle
\qquad\quad+\varepsilon\int_V fu d\mu+\frac{1}{2}\varepsilon\int_V f\left[-2\bar{u}_f+\ln\left(-\frac{\varepsilon}{\lambda}\bar{f}|V|\right)\right]d\mu.
\end{array}
\end{equation*}
Note that
\[\int_Vfud\mu=\bar{u}_f\int_Vfd\mu,\]
then, we have
\begin{equation*}
I(u)=\frac{1}{p} \int_V |\nabla u|^pd\mu-\frac{1}{2}\varepsilon\int_V f\ln\left(\int_V e^{2u-2\bar{u}_f}d\mu\right)d\mu+\frac{1}{2}\varepsilon\bar{f}|V|\ln\left(-\frac{\varepsilon}{\lambda}\bar{f}|V|\right).
\end{equation*}
Set
\[w=\frac{u-\bar{u}_f}{\|\nabla (u-\bar{u}_f)\|_p}=\frac{u-\bar{u}_f}{\|\nabla u\|_p},\]
then
\begin{equation*}
\begin{array}{ll}\vspace{0.3cm}\displaystyle\bar{w}_f=\frac{1}{\bar{f}|V|}\int_Vwfd\mu=\frac{1}{\bar{f}|V|}
\int_V\frac{(u-\bar{u}_f)f}{\|\nabla (u-\bar{u}_f)\|_p}d\mu=0,
\end{array}
\end{equation*}
and $\|\nabla w\|_p^p=1$. Furthermore, by Lemma \ref{l3.3}, for any $\beta\geqslant0$ and $\alpha\geqslant0$, one can find a constant $C_*$
depending on $\alpha$, $\beta$, $p$, $f$ and $|V|$ such that
\begin{equation*}\label{3.02}
\int_V e^{\beta |w|^\alpha}d\mu \leqslant C_*.
\end{equation*}
On the other hand, by an elementary inequality for $\delta>0$ and conjugate exponents $p, q$ ($\dfrac{1}{p} + \dfrac{1}{q} = 1$):
\[ab\leqslant\dfrac{\delta^p}{p} a^p+ \dfrac{\delta^{-q}}{q}b^q, \quad \forall a,b\geqslant0,\]
we have
\[u-\bar{u}_f=\|\nabla u\|_pw\leqslant\frac{\delta^p}{p} \|\nabla u\|_p^p+ \frac{\delta^{-q}}{q}|w|^q.\]
Then, it follows that
\begin{equation*}
\begin{array}{ll}\vspace{0.3cm}\displaystyle\int_V e ^{2u-2\bar{u}_f}d\mu\leqslant\int_V e ^{2\left(\frac{\delta^p}{p} \|\nabla u\|_p^p+ \frac{\delta^{-q}}{q}|w|^q\right)}d\mu=e^{\frac{2\delta^p}{p}\|\nabla u\|_p^p}\int_V e^{\frac{2\delta^{-q}}{q}|w|^q}d\mu\leqslant Ce^{\frac{2\delta^p}{p}\|\nabla u\|_p^p},
\end{array}
\end{equation*}
where $C$ is a positive constant depending on $p$, $\delta$, $f$ and $|V|$. The above inequality leads to
\begin{equation*}
\begin{array}{ll}\displaystyle I(u)\geqslant\frac{1}{p} \int_V|\nabla u|^p d\mu
-\varepsilon\bar{f}|V|\frac{\delta^p}{p}\|\nabla u\|_p^p-C_1,
\end{array}
\end{equation*}
where
\[C_1=\dfrac{1}{2}\varepsilon\bar{f}|V|\ln C-\dfrac{1}{2}\varepsilon\bar{f}|V|\ln\left(-\dfrac{\varepsilon}{\lambda}\bar{f}|V|\right).\]
Choosing $\delta=\left(2\varepsilon\bar{f}|V|\right)^{-1/p}$, we obtain for all $u\in\mathcal{B}$,
\begin{equation}\label{3.03}
I(u)\geqslant\frac{1}{2p}\int_V |\nabla u|^p d\mu-C_1.
\end{equation}
Therefore, $I$ has a lower bound on the set $\mathcal{B}$. This allows us to define
\[\hat{b}=\inf_{u\in\mathcal{B}} I(u) .\]

Take a sequence of functions $\{u_n\} \subset \mathcal{B}$ such that $I(u_n)\to \hat{b}$. We can suppose that
\[I(u_n)\leqslant \hat{b}+1,\]
for every $n$. It follows from \eqref{3.03} that
\[\|\nabla u_n\|_p^p\leqslant 2p\left(I(u_n)+C_1\right)\leqslant 2p\left(\hat{b}+1+C_1\right).\]
Since $u_n\in\mathcal{B}$, it follows that
\begin{align}\vspace{0.3cm}\displaystyle |{\overline{u_n}}_f|&=\dfrac{|\int_Vfu_nd\mu|}{\int_Vfd\mu}\notag\\ \vspace{0.3cm}\displaystyle
&=\dfrac{1}{\varepsilon\bar{f}|V|}\left|I(u_n)-\dfrac{1}{p}\int_V |\nabla u|^p d\mu\right|\notag\\ \displaystyle
&\leqslant \dfrac{1}{\varepsilon\bar{f}|V|}\bigg(3(\hat{b}+1)+2C_1\bigg).\label{bound-1}
\end{align}
Note that $\int_V(u_n-{\overline{u_n}}_f)fd\mu=0$, by Lemma \ref{l3.4}, we have
\begin{equation}\label{bound-2}
\|u_n-{\overline{u_n}}_f\|_p^p\leqslant C\int_V|\nabla(u_n-{\overline{u_n}}_f)|^pd\mu
=C\|\nabla u_n\|_p^p.
\end{equation}
Thus, by \eqref{bound-1} and \eqref{bound-2}, we have
\[\|u_n\|_p\leqslant\|u_n-{\overline{u_n}}_f\|_p+\|{\overline{u_n}}_f\|_p\leqslant C_2,\]
where $C_2$ is a positive constant depending on $\hat{b}$, $p$, $\delta$, $f$ and $|V|$. Therefore, $\{u_n\}$ is bounded in $W^{1,p}(V)$. Since $V$ is finite, by Lemma \ref{l3.5}, there exists some $u\in W^{1,p}(V)$ such that, up to a subsequence, $u_n\to u$ in $W^{1,p}(V)$. It is easy to see that $u\in\mathcal{B}$ and $I(u)=\hat{b}$.

\medskip
Finally, we derive the Euler-Lagrange equation of $I(u)$. For any $h\in W^{1,p}(V)$, it holds
\begin{equation*}
\begin{array}{ll}\vspace{0.3cm}\displaystyle 0=\frac{d}{dt}\bigg|_{t=0}\bigg\{I(u+th)+\tau\left(\int_V\lambda e^{2(u+th)}d\mu+\varepsilon\bar{f}|V|\right)\bigg\}\\ \vspace{0.3cm} \displaystyle
\quad=\int_V \left(-\Delta_pu+2\lambda\tau e^{2u}+\varepsilon f\right)h d\mu,
\end{array}
\end{equation*}
where $\tau$ is a constant. Thus, we have
\begin{equation}\label{zz01}
\Delta_p u=2\lambda\tau e^{2u}+\varepsilon f.
\end{equation}
Integrating on both sides of equation \eqref{zz01}, we obtain $\tau=1/2$. Then $u$ is a solution of equation \eqref{33}.
\end{proof}

\section{Topological degree and the existence result}
\setcounter{equation}{0}

In this section, we prove Theorem \ref{t1.2} and Theorem \ref{t1.3}. We first calculate the topological degree of certain maps related to the Chern-Simons Higgs models for $p$-Laplacian.

\begin{proof}[Proof of Theorem \ref{t1.2}]
Let $V=\left\{x_{1}, \cdots, x_{l}\right\}$ be a finite set, and define $X=L^{\infty}(V)$. We may identify $X$ with the Euclidean space $\mathbb{R}^{l}$. To apply the homotopy invariance of the topological degree without introducing ambiguity, we define a map $F: X \times[0,1] \rightarrow X$ by
\[F(u, \sigma)=-\Delta_p u+\lambda e^{u}\left(e^{u}-\sigma\right)+f, \quad(u, \sigma) \in X \times[0,1] .\]
Then we can conclude that $F$ is well-defined. In fact, if $p\geqslant2$, it is easy to see that $F$ is well-defined. If $1<p<2$, for any $y_{*}\sim x$ satisfying $u(y_*)=u(x)$, we can remove such $y_{*}$ from the summation, leading to
\[\Delta_{p} u(x)=\sum_{\{y:~y \sim x \}\setminus \{y_*|u(y_*)=u(x)\}}|u(y)-u(x)|^{p-2}(u(y)-u(x)),\]
which ensures that $F$ remains a well-defined map.

For the real number $\lambda$ and the fixed function $f$, since $\lambda \bar{f}<0$, then by applying Theorem \ref{t1.1}, we can conclude that there exists a constant $R_*>0$, depending only on $\lambda$, $f$ and $|V|$, such that for all $\sigma \in[0,1]$, all solutions of $F(u, \sigma)=0$ satisfy \[\|u\|_{L^{\infty}(V)}<R_*.\]
Denote by $B_{r} \subset X$ a ball centered at $0\in X$ with radius $r$, and its boundary by $\partial B_{r}=\left\{u \in X:\|u\|_{L^{\infty}(V)}=r\right\}$. Thus we can conclude that
\[0 \notin F\left(\partial B_{R}, \sigma\right), \quad \forall~\sigma \in[0,1],~ \forall~R \geqslant R_*.\]
By the homotopy invariance of the topological degree, we have
\begin{equation}\label{3.2}
\operatorname{deg}\left(F(\cdot, 1), B_{R}, 0\right)=\operatorname{deg}\left(F(\cdot, 0), B_{R}, 0\right), \quad \forall~R \geqslant R_*.
\end{equation}
Given any $\varepsilon>0$, we define another map $G_{\varepsilon}: X \times[0,1] \rightarrow X$ by
\[G_{\varepsilon}(u, t)=-\Delta_p u+\lambda e^{2 u}+(t+(1-t) \varepsilon) f, \quad(u, t) \in X \times[0,1] .\]
As with $F$, the map $G_\varepsilon$ is well-defined. Since $\lambda\int_Vfd\mu<0$, by applying Theorem \ref{t1.1} again, there exists a constant $R_{\varepsilon}^*>0$, depending only on $\varepsilon$, $\lambda$, $f$ and $|V|$, such that any solution $u$ of $G_{\varepsilon}(u, t)=0$ satisfy $\|u\|_{L^{\infty}(V)}<R_{\varepsilon}^*$ for all $t \in[0,1]$. This implies
\[0 \notin G_{\varepsilon}\left(\partial B_{R_\varepsilon}, t\right), \quad \forall~t \in[0,1],~\forall~R_\varepsilon\geqslant R_{\varepsilon}^*.\]
Hence, by the homotopy invariance of the topological degree, we have
\begin{equation}\label{3.3}
\operatorname{deg}\left(G_{\varepsilon}(\cdot, 1), B_{R_\varepsilon}, 0\right)=\operatorname{deg}\left(G_{\varepsilon}(\cdot, 0), B_{R_\varepsilon}, 0\right).
\end{equation}

To calculate $\operatorname{deg}\left(G_{\varepsilon}(\cdot, 0), B_{R_\varepsilon}, 0\right)$, we need to understand the solvability of the equation
\begin{equation}\label{3.4}
G_{\varepsilon}(u, 0)=-\Delta_p u+\lambda e^{2 u}+\varepsilon f=0.
\end{equation}
Now, we claim that: there exists an $\varepsilon_{0}>0$ such that for any $\varepsilon \in\left(0, \varepsilon_{0}\right)$, equation \eqref{3.4} has a unique solution $u_{\varepsilon}$, which satisfies $e^{2 u_{\varepsilon}} \leqslant \eta\varepsilon$, where $\eta$ is a constant depending only on $\lambda$, $f$ and $|V|$.

First, we prove the existence of the solution for equation \eqref{3.4}. Note that, under the assumptions that $\varepsilon>0$ and $\lambda\bar{f}<0$, by Lemma \ref{l3.2} and Lemma \ref{l3.1}, we can deduce that equation \eqref{3.4} has a solution $u_\varepsilon$. Moreover, integrating on both sides of \eqref{3.4}, we have
\begin{equation}\label{degg}
\int_{V} e^{2 u_{\varepsilon}} d \mu= -\frac{\varepsilon}{\lambda}\bar{f}|V|.
\end{equation}
It follows that
\begin{equation}\label{3.07}
e^{2 u_\varepsilon(x)} \leqslant \eta\varepsilon, \quad \forall~x \in V,
\end{equation}
where we take the constant $\eta\geqslant-\dfrac{1}{\lambda}\bar{f}|V|>0$.

\medskip
Second, we prove the uniqueness of the solution $u_\varepsilon$. Let $\varphi$ be an arbitrary solution of \eqref{3.4}, namely it satisfies
\begin{equation}\label{3.7}
\Delta_p \varphi=\lambda e^{2 \varphi}+\varepsilon f.
\end{equation}
Similar to \eqref{degg} and \eqref{3.07}, we have
\begin{equation}\label{3.8}
\int_{V} e^{2 \varphi} d \mu \leqslant \eta\varepsilon, \quad e^{2 \varphi(x)} \leqslant \eta\varepsilon\quad \text { for all } x \in V.
\end{equation}
Subtracting \eqref{3.7} from \eqref{3.4} and integrating on both sides, we have
\[0=\int_{V} (\Delta_p u_{\varepsilon}-\Delta_p\varphi) d \mu=\lambda \int_{V}\left(e^{2 u_{\varepsilon}}-e^{2 \varphi}\right) d \mu,\]
which implies
\[\min _{V}\left(u_{\varepsilon}-\varphi\right)<0<\max _{V}\left(u_{\varepsilon}-\varphi\right) .\]
Then, it follows that
\begin{equation}\label{3.9}
\left|u_{\varepsilon}-\varphi\right| \leqslant \max _{V}\left(u_{\varepsilon}-\varphi\right)-\min _{V}\left(u_{\varepsilon}-\varphi\right).
\end{equation}
By \eqref{3.4}, \eqref{3.07}, \eqref{3.7}, and \eqref{3.8}, we have
\begin{align}\label{3.10}
\vspace{0.3cm}\displaystyle\left|\Delta_p u_{\varepsilon}(x)-\Delta_p\varphi(x)\right| & =\left|\lambda\left(e^{2 u_{\varepsilon}(x)}-e^{2 \varphi(x)}\right)\right| \notag\\ \vspace{0.3cm}\displaystyle
& \leqslant 2|\lambda|\left(e^{2 u_{\varepsilon}(x)}+e^{2 \varphi(x)}\right)\left|u_{\varepsilon}(x)-\varphi(x)\right|\notag\\ \displaystyle
& \leqslant 4\eta|\lambda|\varepsilon\left|u_{\varepsilon}(x)-\varphi(x)\right|.
\end{align}
Combining \eqref{2.6}, \eqref{3.9} and \eqref{3.10}, we obtain
\begin{equation}\label{3.11}
\max _{V}\left(u_{\varepsilon}-\varphi\right)-\min _{V}\left(u_{\varepsilon}-\varphi\right) \leqslant C_3\varepsilon \left(\max _{V}\left(u_{\varepsilon}-\varphi\right)-\min _{V}\left(u_{\varepsilon}-\varphi\right)\right),
\end{equation}
where
\[C_3=4\eta|\lambda|\left(2(l-1)^{p-1}|V|C\right)^{1/p}.\]
Choose $\varepsilon_0=1/(2C_3)$.
If we take $0<\varepsilon<\varepsilon_{0}$, then \eqref{3.11} implies $\varphi \equiv u_{\varepsilon}$ on $V$. Thus \eqref{3.4} has a unique solution. Hence our claim holds.

We now proceed to prove the theorem. We calculate the topological degree of $G_\varepsilon$ at $u_\varepsilon$.
Note that $-\dfrac{\partial\Delta_pu}{\partial u}: X \rightarrow X$ is a symmetric operator. In fact, as before, we assume that $V=\{x_i|i=1,2,\cdot\cdot\cdot,l\}$, then we can regard $-\Delta_{p} u$ as a vector, i.e.,
\begin{equation*}
\begin{array}{ll}\vspace{0.3cm}\displaystyle-\Delta_{p} u=\big(-\Delta_{p} u(x_{1}), \cdots,-\Delta_{p} u(x_{\ell})\big)
\\ \displaystyle
\qquad\quad=\bigg(\sum_{y\sim x_{1}}|u(y)-u(x_{1})|^{p-2}(u(x_{1})-u(y)),\\ \displaystyle
\qquad\qquad\quad  \cdots,\sum_{y \sim x_{\ell}}|u(y)-u(x_{\ell})|^{p-2}(u(x_{\ell})-u(y)\bigg).
\end{array}
\end{equation*}
Denote
\begin{equation}\label{3.12}
 -\dfrac{\partial\Delta_pu}{\partial u}=\left[\begin{array}{cccc}
a_{11} & a_{12} & \cdots & a_{1 l} \\
a_{21} & a_{22} & \cdots & a_{2 l} \\
\vdots & & &  \\
a_{l 1} & a_{l 2} & \cdots & a_{ll}
\end{array}\right].
\end{equation}
Then, if $p\geqslant2$, we can calculate that
\[a_{ii}=(p-1)\sum_{y \sim x_{i}}\left|u(y)-u\left(x_{i}\right)\right|^{p-2},i=1,2,\cdots,l,\]
and
\[a_{ij}=a_{ji}=-(p-1)\left|u\left(x_{j}\right)-u\left(x_{i}\right)\right|^{p-2} \quad\text { or }\quad a_{ij}=a_{ji}=0, \]
for $i,j=1,2,\cdots,l$ and $i\neq j$. Moreover, the following equalities hold:
\[a_{i1}+a_{i2}+\cdots+a_{i l}=0,\quad \hbox{for}~i=1,2,3,\cdots,l, \]
\[a_{1j}+a_{2j}+\cdots+a_{l j}=0,\quad \hbox{for}~j=1,2,3,\cdots,l. \]
Therefore, $-\dfrac{\partial\Delta_pu}{\partial u}$ is symmetrical. If $1<p<2$, we can also conclude that it is symmetrical. In the calculation process of $a_{ij}$, $i,j=1,2,\cdots,l$, for $x\in V$, if there exist some $y$ with $y\sim x$ such that $u(y)=u(x)$, we remove the corresponding term. This means that we only consider the terms that $u(y)\neq u(x)$, for each $y\sim x$. The corresponding Jacobian matrix has a similar form; hence, we omit it here.

\medskip
On the other hand, one knows that its eigenvalues can be written as
\[0=\lambda_{0}<\lambda_{1} \leqslant \lambda_{2} \leqslant \cdots \leqslant \lambda_{l-1},\]
where $l$ is the number of all points in $V$. By the claim above, under the assumption that $\lambda \bar{f}<0$, we may choose a sufficiently small $\varepsilon>0$ such that $G_{\varepsilon}(u, 0)=0$ has a unique solution $u_{\varepsilon}$ satisfying
\begin{equation}\label{deg-1}
2|\lambda| e^{2 u_{\varepsilon}(x)}<\lambda_{1} .
\end{equation}
Substituting $u=u_\varepsilon$ into \eqref{3.12}, we have
\[D G_{\varepsilon}\left(u_{\varepsilon}, 0\right)=-\dfrac{\partial\Delta_pu}{\partial u}\bigg|_{u=u_\varepsilon}+2 \lambda e^{2 u_{\varepsilon}} \mathrm{I},\]
where $I$ represents the $l \times l$ diagonal matrix diag $[1,1, \cdots, 1]$. Moreover, by \eqref{deg-1}, we have
$\lambda_{j}+2 \lambda e^{2 u_{\varepsilon}(x)}>0$, for $j=1,2,\cdots,l-1$. Therefore, by the symmetry of $-\dfrac{\partial\Delta_pu}{\partial u}$, we have
\begin{equation*}\label{3.012}
\begin{array}{ll}\vspace{0.3cm}\displaystyle\operatorname{deg}\left(G_{\varepsilon}(\cdot, 0), B_{R_{\varepsilon}}, 0\right)
&=\operatorname{sgn} \operatorname{det}\left(D G_{\varepsilon}\left(u_{\varepsilon}, 0\right)\right)\\\vspace{0.4cm} \displaystyle
&=\operatorname{sgn}\left\{2 \lambda e^{2 u_{\varepsilon}(x)} \Pi_{j=1}^{l-1}\left(\lambda_{j}+2 \lambda e^{2 u_{\varepsilon}(x)}\right)\right\}\\ \displaystyle
&=\operatorname{sgn} \lambda.
\end{array}
\end{equation*}
Choose $R_0=\max\{R_*,R_\varepsilon^*\}$. Together with \eqref{3.2} and \eqref{3.3}, we have
\[\begin{aligned}
\operatorname{deg}\left(F(\cdot, 1), B_{R}, 0\right) & =\operatorname{deg}\left(F(\cdot, 0), B_{R}, 0\right) \\
& =\operatorname{deg}\left(G_{\varepsilon}(\cdot, 1), B_{R}, 0\right) \\
& =\operatorname{deg}\left(G_{\varepsilon}(\cdot, 0), B_{R}, 0\right) \\
& =\operatorname{sgn} \lambda,
\end{aligned}\]
for arbitrary $R\geqslant R_0$. Thus the proof of Theorem \ref{t1.2} is completed.
\end{proof}

\begin{proof}[Proof of Theorem \ref{t1.3}]
Since $\lambda\bar{f}<0$, then by Theorem \ref{t1.1} and Theorem \ref{t1.2}, we can find a large $R_0>1$ such that for all $R\geqslant R_0$,
\[\operatorname{deg}(F, B_{R}, 0)\neq0.\]
Therefore, by the Kronecker's existence theorem \cite{Chang-2005}, we can conclude that equation \eqref{1.1} has a solution.
\end{proof}

\end{CJK*}

\end{document}